\documentclass[12pt]{amsart}
                                    
\usepackage{amsmath, graphicx}                                                                                                                                                                    
\usepackage{color}
\usepackage{amsthm}
\usepackage{amssymb}
\usepackage{amscd}
\usepackage{hyperref}
\usepackage{tikz}

\usepackage{enumerate}
\usetikzlibrary{matrix,arrows}

\oddsidemargin = .25in
\evensidemargin = .25in
\textwidth = 6in

\newtheorem{theorem}{Theorem}[section]
\newtheorem{proposition}[theorem]{Proposition}
\newtheorem{corollary}[theorem]{Corollary}

\theoremstyle{definition}
\newtheorem{definition}[theorem]{Definition}

\newtheorem{example}[theorem]{Example}
\newtheorem{remark}[theorem]{Remark}      
\newtheorem{observation}[theorem]{Observation}                         
\newtheorem*{acknowledgement}{Acknowledgments}

\DeclareMathOperator{\Tor}{Tor}
\DeclareMathOperator{\type}{mtype}

\DeclareMathOperator{\codim}{codim}

\DeclareMathOperator{\td}{td}
\DeclareMathOperator{\pd}{pd}

\DeclareMathOperator{\lk}{link}

\pagestyle{empty}

\title{The  Type Defect of a Simplicial Complex}

\author[H.~Dao]{Hailong Dao}     
\address{Hailong Dao\\     
	Department of Mathematics\\     
	University of Kansas\\     
	405 Snow Hall \\
1460 Jayhawk Blvd. \\
Lawrence, KS 66045-7594  
	USA}     
\email{hdao@math.ku.edu}
\author[J.~Schweig]{Jay Schweig}
\address{Jay Schweig\\
	Department of Mathematics\\                   
	Oklahoma State University\\     
	401 MSCS \\
	Stillwater\\
	OK \ 74078-1058\\     
	USA}    
\email{jay.schweig@okstate.edu}

\keywords{Cohen-Macaulay complexes, chordal graphs, linear resolutions, Betti numbers}

\begin{document}
\maketitle


\begin{abstract}
Fix a field $k$. When $\Delta$ is a simplicial complex on $n$ vertices with Stanley-Reisner ideal $I_\Delta$, we define and study an invariant called the \emph{type defect} of $\Delta$.  Except when $\Delta$ is a single simplex, the type defect of $\Delta$, $\td(\Delta)$, is the difference $ \dim \Tor_c^S(S/ I_\Delta,k) - c$, where $c$ is the codimension of $\Delta$ and $S = k[x_1, \ldots x_n]$.  We show that this invariant admits surprisingly nice properties.  For example, it is well-behaved when one glues two complexes together along a face. Furthermore,  $\Delta$ is Cohen-Macaulay if  $\td(\Delta) \leq 0$.  On the other hand, if $\Delta$ is a simple graph (viewed as a one-dimensional complex), then $\td(\Delta') \geq 0$ for every induced subgraph $\Delta'$ of $\Delta$ if and only if $\Delta$ is chordal.  Requiring connected induced subgraphs to have type defect zero allows us to define a class of graphs that we call \emph{treeish}, and which we generalize to simplicial complexes.  We then extend some of our chordality results to higher dimensions, proving sharp lower bounds for most Betti numbers of ideals with linear resolution, and classifying when equalities occur.  As an application, we prove sharp lower bounds for Betti numbers of graded ideals (not necessarily monomial) with linear resolution.

\end{abstract}
\thispagestyle{empty}

\section{Introduction}

Let $k$ be a  field, let $S = k[x_1, x_2, \ldots, x_n]$, and let $I$ be a homogenous ideal such that $S/I$ is Cohen-Macaulay.  The \emph{type} of the quotient $S/I$ is the total Betti number $b_c(S/I) :=  \dim \Tor_c^S(S/ I,k)$, where $c$ is the codimension of $I$.  That is, the type of $S/I$ is the dimension of the free module at position $c$ in a minimal free resolution of $S/I$ over $S$.  In the local setting, this invariant is rather classical. In this paper we study some variants of this concept for (not necessarily) Cohen-Macaulay square-free monomial ideals. Roughly speaking, we shall show that these invariants behave very well with respect to the ``gluing" of simplicial complexes, and that they are strongly linked to important and much-studied properties of complexes such as chordality, linear resolution, or Cohen-Macaulayness.     

Let $\Delta$ be a $(d-1)$ dimensional simplicial complex with $n$ vertices. Throughout, let $c=n-d$, the codimension of $\Delta$.  Our key definitions are as follows. 

\begin{definition} We define the {\it modified type} of $\Delta$ to be: 
\[
\type(\Delta) = \dim \Tor_c^S(S/ I_\Delta,k) - \dim \Tor_c^S(S/ I_\Delta,k)_c
\]
where $I_\Delta$ is the Stanley-Reisner ideal of $\Delta$ and $M_c$ denotes the degree $c$ component of a graded module $M$.  
We also define the \emph{type defect} of $\Delta$, $\td(\Delta)$, to be the difference between its modified type and codimension:
\[
\td(\Delta) = \type(\Delta) - c. 
\]
\end{definition}

Except in the case when $\Delta$ has one facet, $\type(\Delta)$ equals $b_c(S/I_\Delta)$, the total Betti number of $S/I$ with index $c$, see Corollary \ref{basic}.  However, this small modification (see Section \ref{s:background}) actually makes the results and proofs much more elegant. We summarize some of the key statements below (see Theorems \ref{typefact}, \ref{chordal} and Corollary \ref{secondglue} for details):

\begin{theorem}\label{intro1}
Let $\Delta$ be a $(d-1)$-simplicial complex with $n$ vertices. 

\begin{enumerate}

\item (Cohen-Macaulayness) If $\td(\Delta) \leq 0$ then $\Delta$ is Cohen-Macaulay (over $k$). 

\item (Chordality) Assume that $\Delta$ is a graph with vertex set $V$. Then $\Delta$ is chordal if and only if  $\td(\Delta|_W) \geq 0$ for any $W \subseteq V$.  

\item (Gluing) Suppose that $\Delta$ is obtained by gluing two Cohen-Macaulay complexes $\Delta_1$ and $\Delta_2$ of dimension $(d-1)$ along an $(\ell-1)$-dimensional face. Then 
\[
\td(\Delta) = \td(\Delta_1) + \td(\Delta_2) -d+l + \binom{n-\ell}{n-d+1}.
\]
In  particular if $\ell \geq d-1$, then $$\td(\Delta) = \td(\Delta_1) + \td(\Delta_2).$$
\end{enumerate}
\end{theorem}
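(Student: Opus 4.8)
The plan is to prove the formula first for the modified type, then pass to $\td$ via $\td=\type-\codim$. Two preliminary reductions clean things up. First, the modified type does not depend on the ambient polynomial ring: if $\Gamma$ is a complex on a vertex set $W\subseteq W'$, the minimal free resolution of $S/I_\Gamma$ over $k[x_w:w\in W']$ is the minimal resolution over $k[x_w:w\in W]$ tensored with the Koszul complex on the extra variables, so the top Tor module is only shifted in internal degree and $\dim\Tor^S_c(S/I_\Gamma,k)-\dim\Tor^S_c(S/I_\Gamma,k)_c=\type(\Gamma)$ in either ring; hence I may compute everything inside $S=k[x_1,\dots,x_n]$ with $n=|V(\Delta)|$. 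Second, writing $\sigma$ for the glued face, so $|\sigma|=\ell$, the hypothesis is exactly that $\Delta=\Delta_1\cup\Delta_2$ and $\Delta_1\cap\Delta_2=\overline\sigma$, the full simplex on $\sigma$ regarded as a subcomplex on all $n$ vertices. Combined with the standard identities $I_{\Gamma_1\cup\Gamma_2}=I_{\Gamma_1}\cap I_{\Gamma_2}$ and $I_{\Gamma_1\cap\Gamma_2}=I_{\Gamma_1}+I_{\Gamma_2}$, this gives the Mayer--Vietoris short exact sequence of graded $S$-modules
\[
0\longrightarrow S/I_\Delta\longrightarrow S/I_{\Delta_1}\oplus S/I_{\Delta_2}\longrightarrow S/I_{\overline\sigma}\longrightarrow 0.
\]

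Next I would apply $\Tor^S_\bullet(-,k)$ and examine homological position $c=n-d$. Since $\Delta_1,\Delta_2$ are Cohen--Macaulay of codimension $c$, Auslander--Buchsbaum gives $\pd_S(S/I_{\Delta_i})=c$, so $\Tor^S_{c+1}(S/I_{\Delta_i},k)=0$ and the long exact sequence collapses to the four-term exact sequence
\[
0\longrightarrow \Tor^S_{c+1}(S/I_{\overline\sigma},k)\xrightarrow{\ \delta\ }\Tor^S_c(S/I_\Delta,k)\longrightarrow\Tor^S_c(S/I_{\Delta_1},k)\oplus\Tor^S_c(S/I_{\Delta_2},k)\xrightarrow{\ \phi\ }\Tor^S_c(S/I_{\overline\sigma},k).
\]
Because $S/I_{\overline\sigma}=k[x_v:v\in\sigma]$ is a complete intersection resolved by the Koszul complex on the $n-\ell$ variables $\{x_v:v\notin\sigma\}$, the module $\Tor^S_j(S/I_{\overline\sigma},k)$ is concentrated in internal degree $j$ and has dimension $\binom{n-\ell}{j}$. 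In particular $\Tor^S_{c+1}(S/I_{\overline\sigma},k)$ sits in degree $c+1$ with dimension $\binom{n-\ell}{n-d+1}$, while $\Tor^S_c(S/I_{\overline\sigma},k)$ sits entirely in degree $c$.

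The heart of the proof is then a degree-graded count in this sequence. All maps preserve internal degree, the image of $\delta$ lies in degree $c+1$, and $\phi$ vanishes in every internal degree except $c$. So in internal degree $j$,
\[
\dim\Tor^S_c(S/I_\Delta,k)_j=\binom{n-\ell}{n-d+1}\bigl[j=c+1\bigr]+\sum_{i=1,2}\dim\Tor^S_c(S/I_{\Delta_i},k)_j-\bigl[j=c\bigr]\rank\phi_c,
\]
where $[\,\cdot\,]$ is the Iverson bracket. Summing over all $j$ and then subtracting the single equation at $j=c$, the undetermined $\rank\phi_c$ cancels, and using the definition of $\type$ together with the ring-independence from the first paragraph we are left with
\[
\type(\Delta)=\binom{n-\ell}{n-d+1}+\type(\Delta_1)+\type(\Delta_2).
\]
Finally, with $c_i$ the codimension of $\Delta_i$ and $n=|V(\Delta_1)|+|V(\Delta_2)|-\ell$, one has $c_1+c_2-c=\ell-d$; substituting $\type=\td+\codim$ everywhere in the last display yields $\td(\Delta)=\td(\Delta_1)+\td(\Delta_2)-d+\ell+\binom{n-\ell}{n-d+1}$. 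The ``in particular'' claim follows at once: a face of a $(d-1)$-complex has at most $d$ vertices, so $\ell\le d$, and $\ell=d$ makes $\binom{n-d}{n-d+1}=0$ with $-d+\ell=0$, whereas $\ell=d-1$ makes $\binom{n-d+1}{n-d+1}=1$, exactly cancelling $-d+\ell=-1$.

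The step I expect to require the most care is the degree-graded count: one must observe that homological position $c$ is the only place where the two ``outer'' terms of the four-term sequence meet $\Tor^S_c(S/I_\Delta,k)$ nontrivially, and that the rank of the connecting map $\phi_c$---which one would otherwise have to compute, and which genuinely depends on whether $\Delta_1$ or $\Delta_2$ is a simplex---drops out of the answer once one passes from $b_c(S/I_\Delta)$ to the modified type. That cancellation is exactly what the correction term $-\dim\Tor^S_c(S/I_\Delta,k)_c$ in the definition of $\type$ is engineered to produce, and it is what makes the gluing formula as clean as stated; the only other point needing attention is checking that the hypothesis $\Delta_1\cap\Delta_2=\overline\sigma$ is precisely what delivers $I_{\Delta_1}+I_{\Delta_2}=I_{\overline\sigma}$.
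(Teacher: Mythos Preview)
Your proposal addresses only part~(3); parts~(1) and~(2) are not treated at all. In the paper these are handled by entirely different methods: (1) is Theorem~\ref{typefact}, which is deduced from the classical Syzygy Theorem after disposing of the simplex case, and (2) is Theorem~\ref{chordal}, which combines Dirac's simplicial-vertex theorem with an explicit computation of $\td$ on chordless cycles via Observation~\ref{2conn}. A complete proof of the stated theorem needs these ingredients as well.

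For part~(3), your argument is correct and follows essentially the same route as the paper's proof of Theorem~\ref{mainglue} and Corollary~\ref{secondglue}: the Mayer--Vietoris short exact sequence, truncation of the long exact $\Tor$ sequence using $\pd_S(S/I_{\Delta_i})=c$, the Koszul computation for the simplex $\overline\sigma$, and a degree-by-degree count. Your device of letting the unknown $\rank\phi_c$ cancel when one subtracts the degree-$c$ piece is a mild repackaging of the paper's step, which instead observes via Hochster's formula that $\Tor^S_c(k[\Delta])_c=0$ whenever $\Delta$ is not a simplex; both bookkeeping choices lead to the same identity for $\type$. One caution: your opening assertion that ``the modified type does not depend on the ambient polynomial ring'' is phrased for an arbitrary complex $\Gamma$, but the justification you give (that the \emph{top} $\Tor$ is merely shifted in internal degree) only establishes this when $\Gamma$ is Cohen--Macaulay, so that codimension equals projective dimension. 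The general claim is false---Observation~\ref{obsCM} in the paper gives an explicit non-CM counterexample where the quantity jumps from $4$ to $5$ upon adding a variable---though this does not damage your argument, since you only invoke the claim for the Cohen--Macaulay complexes $\Delta_1$ and $\Delta_2$.
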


In the second half of this work, we focus on extending the chordality part of Theorem \ref{intro1} to complexes of higher dimensions. The main difficulty here is that there are many proposed definitions of chordality in general (for an incomplete list, see \cite{ans}, \cite{bhpz}, \cite{emt}, \cite{hvt},  \cite{russ}). Thus we focus our attention on complexes with {\it linear resolutions}, a property that often appears whatever definition of chordality one uses. Here we are able to prove that for such a complex, $\td(\Delta)\geq 0$. In fact, we prove a number of lower bounds for {\it all} the Betti numbers of $S/I_{\Delta}$ up to the codimension and characterize when any of these inequalities become equality. The full result reads:

\begin{theorem}\label{intro2}
Let $\Delta$ be a simplicial complex  such that $I=I_\Delta$ has a linear resolution. Let $c$ be the codimension of $I$ and let $s$ be its generating degree (the degree of any minimal generator of $I$, or the size of any minimal non-face). Then for each $j$ such that $1\leq j\leq c$ we have $$b_j(S/I_\Delta)\geq\binom{s+j-2}{j-1}\sum_{i=j-1}^{c-1}\binom{s+i-1}{s+j-2}.$$   
Furthermore, the following are equivalent:
\begin{enumerate}
\item Equality occurs for some $j$ (between $1$ and $c$). 
\item $\Delta$ is Cohen-Macaulay (so $\Delta$ is bi-Cohen-Macaulay in the sense of \cite{floystad}). 
\item $\Delta$ has exactly $\binom{s+c-1}{c}$ facets  of size $n-c$. 
\item $\Delta$ has exactly $\binom{s+c-1}{s}$ minimal non-faces (necessarily of size $s$). 
\end{enumerate}
\end{theorem}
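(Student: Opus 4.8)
The plan is to use Boij--Söderberg theory. Since $I=I_\Delta$ has an $s$-linear resolution, the graded Betti numbers $b_{i,j}(S/I)$ vanish except for $b_{0,0}=1$ and the linear strand $b_{i,\,i+s-1}$ with $1\le i\le p$, where $p=\pd(S/I)$. Because $S$ is regular and $\codim I=c$, the grade of $I$ is $c$, so by Auslander--Buchsbaum $p\ge c$, with equality precisely when $S/I$ is Cohen--Macaulay. Boij--Söderberg theory writes the Betti table of $S/I$ as a positive rational combination $\sum_{t\in T}\lambda_t\,\pi^{(t)}$, where $T$ is a chain of positive integers and each $\pi^{(t)}$ is the pure diagram occupying homological positions $0,1,\dots,t$; since the support of $\pi^{(t)}$ must lie inside that of $(b_{i,j}(S/I))$, its degree sequence is forced to be $(0,s,s+1,\dots,s+t-1)$. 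Normalizing each $\pi^{(t)}$ so that its $0$-th entry equals $1$, the constraint $b_{0,0}(S/I)=1$ gives $\sum_{t\in T}\lambda_t=1$.

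The first key point is that $\min T=c$. Indeed, the $K$-polynomial $K_{S/I}(u)=1+\sum_{i\ge 1}(-1)^i b_i(S/I)\,u^{i+s-1}$ equals $H_{S/I}(u)(1-u)^n$ and hence is divisible by exactly $(1-u)^c$, while the $K$-polynomial of a pure diagram of codimension $t$ is divisible by exactly $(1-u)^t$. Writing $K_{S/I}=\sum_t\lambda_t K^{(t)}$, factoring out $(1-u)^{\min T}$, and evaluating at $u=1$ leaves only the $t=\min T$ summand, which is nonzero; hence $\min T=c$. Consequently every $t\in T$ satisfies $t\ge c$, so for $1\le j\le c$ every pure summand contributes at homological position $j$ and $b_j(S/I)=\sum_{t\in T}\lambda_t\,\pi^{(t)}_j$.

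The second key point is the Herzog--Kühl computation of the normalized pure diagram with degree sequence $(0,s,\dots,s+t-1)$, namely $\pi^{(t)}_j=\binom{s+j-2}{j-1}\binom{s+t-1}{s+j-1}$ for $1\le j\le t$, which is strictly increasing in $t$. Combining this with $\sum_{t\in T}\lambda_t=1$ and $t\ge c$ gives, for $1\le j\le c$,
\[
b_j(S/I)=\sum_{t\in T}\lambda_t\,\pi^{(t)}_j \;\geq\; \Big(\sum_{t\in T}\lambda_t\Big)\pi^{(c)}_j=\binom{s+j-2}{j-1}\binom{s+c-1}{s+j-1},
\]
and the hockey-stick identity $\binom{s+c-1}{s+j-1}=\sum_{i=j-1}^{c-1}\binom{s+i-1}{s+j-2}$ puts the right-hand side in the stated form. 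Because $\pi^{(t)}_j$ is strictly increasing in $t$, equality holds for a given $j\in\{1,\dots,c\}$ if and only if $t=c$ for every $t\in T$, i.e. $T=\{c\}$; and $T=\{c\}$ says exactly that $S/I$ has a pure (linear) resolution of length $c$, equivalently $p=c$, equivalently $S/I$ is Cohen--Macaulay. This proves the inequality and the equivalence $(1)\Leftrightarrow(2)$, and shows equality at one $j$ forces it at every $j$. Nothing in this part uses that $I$ is monomial, which is what makes the stated application to arbitrary graded ideals with linear resolution possible.

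Finally I would read off $(3)$ and $(4)$ from the decomposition. Since $I$ is generated in degree $s$, the minimal non-faces of $\Delta$ all have size $s$, and their number is $b_1(S/I)=\sum_{t\in T}\lambda_t\binom{s+t-1}{s}\ge\binom{s+c-1}{s}$, with equality iff $T=\{c\}$; this is the $j=1$ case, so $(4)\Leftrightarrow(2)$. For $(3)$, $\dim\Delta=d-1$ means the facets of size $n-c=d$ are exactly the maximal-dimensional faces, and their number equals the multiplicity $e(S/I)$. Taking the $c$-th derivative of $K_{S/I}=\sum_t\lambda_t K^{(t)}$ at $u=1$ kills every term with $t>c$, so $e(S/I)=\lambda_c\,e(\pi^{(c)})$; by the Huneke--Miller multiplicity formula, $e(\pi^{(c)})=\frac{1}{c!}\prod_{i=1}^{c}(s+i-1)=\binom{s+c-1}{c}$. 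Hence the number of facets of size $n-c$ is $\lambda_c\binom{s+c-1}{c}\le\binom{s+c-1}{c}$, with equality iff $\lambda_c=1$ iff $T=\{c\}$, giving $(3)\Leftrightarrow(2)$. The parenthetical remark is then immediate: $I_\Delta$ has a linear resolution, so $\Delta^\vee$ is Cohen--Macaulay by Eagon--Reiner, and together with $(2)$ this makes $\Delta$ bi-Cohen--Macaulay in the sense of \cite{floystad}. The step I expect to require the most care is pinning down that the only pure diagrams occurring are the $\pi^{(t)}$ with $c\le t\le p$ --- the shape argument together with $\min T=c$; after that, the inequality and all four equivalences follow from the monotonicity of a single binomial coefficient and two classical identities.
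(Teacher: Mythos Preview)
Your proof is correct and takes a genuinely different route from the paper's.

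The paper argues via Alexander duality and the Eagon--Reiner formula: since $I_\Delta$ has a linear resolution, $\Delta^\vee$ is Cohen--Macaulay, and $b_j(S/I_\Delta)=\sum_{i\ge j-1}\binom{i}{j-1}h_i(\Delta^\vee)$. Passing to an Artinian reduction $B$ of $k[\Delta^\vee]$, a quotient of a polynomial ring in $s$ variables, one has $h_i=\dim B_i=\binom{s+i-1}{i}$ for $i<c$ (because $I_{\Delta^\vee}$ is generated in degrees $\ge c$), which gives the inequality by truncating the sum. Equality for any $j$ forces $h_i=0$ for $i\ge c$, hence $b_{c+1}(S/I_\Delta)=0$, hence Cohen--Macaulay. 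Conditions $(3)$ and $(4)$ are then read off as the number of minimal generators of $I_{\Delta^\vee}$ and of $I_\Delta$, respectively.

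Your Boij--S\"oderberg argument replaces this with a convexity statement: the Betti table of $S/I$ is a convex combination of the pure tables $\pi^{(t)}$ with $\pi^{(t)}_0=1$, the $K$-polynomial trick pins down $\min T=c$, and strict monotonicity of $\binom{s+t-1}{s+j-1}$ in $t$ does the rest. Two remarks on the trade-off. First, you are invoking a much deeper theorem (Eisenbud--Schreyer) than anything in the paper's argument, which is elementary once one grants Eagon--Reiner. Second, and this is the real payoff, nothing in your derivation of the inequality or of $(1)\Leftrightarrow(2)$ uses that $I$ is monomial, so you obtain Theorem~\ref{lin} directly, bypassing the paper's reduction through generic initial ideals and stretching. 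Your treatment of $(3)$ via $e(S/I)=\lambda_c\,e(\pi^{(c)})$ and the Huneke--Miller formula is also a clean alternative to the paper's count of generators of $I_{\Delta^\vee}$; note that it relies on $c\in T$, which you do get from $\min T=c$.
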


This allows us to complete classify complexes with linear resolution and type defect zero as follows.  

\begin{corollary}
Let $\Delta$ be a simplicial complex  such that $I=I_\Delta$ has a linear resolution. Then $\td(\Delta)=0$ if and only if $\Delta$ is a tree of $(d-1)$-simplices, glued along codimension one faces. 
\end{corollary}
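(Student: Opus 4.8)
The plan is to prove both directions, using the gluing formula of Theorem~\ref{intro1}(3) for the ``if'' part and the structural consequences of Theorem~\ref{intro2} for the ``only if'' part. For the ``if'' direction, suppose $\Delta$ is built by iteratively gluing $(d-1)$-simplices along $(d-2)$-faces in a tree pattern, with $m$ facets, and induct on $m$. When $m=1$, $\Delta$ is a simplex, its codimension is $0$, and $\Tor_0^S(S/I_\Delta,k)=k$ sits in degree $0$, so $\td(\Delta)=0$. For $m>1$ write $\Delta=\Delta_1\cup\Delta_2$, with $\Delta_1$ the tree on the first $m-1$ simplices and $\Delta_2$ the last one, glued along a common $(d-2)$-face so that $\ell=d-1$. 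Each $\Delta_i$ is Cohen--Macaulay (a simplex is, and $\td(\Delta_1)=0\le 0$ by induction together with Theorem~\ref{intro1}(1)), both are $(d-1)$-dimensional, and $\ell=d-1\ge d-1$, so the ``in particular'' clause of Theorem~\ref{intro1}(3) yields $\td(\Delta)=\td(\Delta_1)+\td(\Delta_2)=0$. One checks separately that such a $\Delta$ has a linear resolution: each new simplex contributes exactly the non-faces pairing its new vertex with the vertices it misses, so $I_\Delta$ is generated in degree $2$ and is the edge ideal of a graph whose complement is chordal, and Fr\"oberg's theorem applies.

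For the ``only if'' direction, assume $I_\Delta$ has a linear resolution of generating degree $s$ and that $\td(\Delta)=0$. Since $\td(\Delta)\le 0$, Theorem~\ref{intro1}(1) makes $\Delta$ Cohen--Macaulay, hence pure of dimension $d-1$, so every facet has size $n-c$. Because the resolution is linear, $\Tor_c^S(S/I_\Delta,k)$ lies in internal degree $s+c-1$, which is not $c$ since $s\ge 2$; thus its degree-$c$ component vanishes and $\type(\Delta)=b_c(S/I_\Delta)$. Applying Theorem~\ref{intro2} at $j=c$ (with equality, as $\Delta$ is Cohen--Macaulay) gives $b_c(S/I_\Delta)=\binom{s+c-2}{c-1}$, so $\td(\Delta)=0$ becomes $\binom{s+c-2}{c-1}=c$; a short computation with binomial coefficients together with $s\ge 2$ forces $s=2$, and Theorem~\ref{intro2}(3), with purity, says $\Delta$ has exactly $c+1$ facets, all of size $n-c=d$. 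It remains to reconstruct $\Delta$. I would show by induction on the number of facets that a Cohen--Macaulay complex with $2$-linear resolution and exactly $c+1$ facets must have a facet meeting the union of the others in a single codimension-one face; deleting that facet---equivalently, restricting to the complement of its private vertex---yields a complex of the same type with one fewer facet (the $2$-linearity persists because it corresponds, via Fr\"oberg, to chordality of a complement graph, which is inherited by induced subgraphs, and $\td$ stays $0$ by the gluing formula). Unwinding the induction exhibits $\Delta$ as a tree of $(d-1)$-simplices glued along codimension-one faces.

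The main obstacle is this reconstruction step: knowing only that $\Delta$ is Cohen--Macaulay with a $2$-linear resolution and has the minimal possible number $c+1$ of facets, one must locate a ``leaf'' facet attached to the rest of the complex along exactly one ridge, so that the induction descends. Equivalently, one must rule out configurations in which every facet shares at least two ridges with the remaining complex, and this is where the linear-resolution hypothesis---through Fr\"oberg/Eagon--Reiner-type Alexander duality, i.e.\ Cohen--Macaulayness of $\Delta^{\vee}$---does genuine work beyond the Cohen--Macaulayness of $\Delta$ alone. By contrast, the ``if'' direction and the numerical reduction $\binom{s+c-2}{c-1}=c$ are routine given Theorems~\ref{intro1} and~\ref{intro2}.
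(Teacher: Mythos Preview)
Your strategy matches the paper's: the ``if'' direction via the gluing formula, and the ``only if'' direction by first extracting $s=2$ and exactly $c+1$ facets from Theorem~\ref{intro2}, then reconstructing $\Delta$ inductively. The gap is precisely where you flag it: you do not actually produce the ``leaf'' facet. The paper closes this in one stroke with Dirac's theorem (Theorem~\ref{dirac}). Since $s=2$, $\Delta$ is flag and hence the clique complex of its $1$-skeleton $G$, which is chordal by Fr\"oberg. A simplicial vertex $v$ of $G$ lies in the unique facet $\{v\}\cup N(v)$ of $\Delta$; Cohen--Macaulayness makes $\Delta$ strongly facet-connected, so this facet shares a $(d-2)$-face with some other facet $F'$, and that face must equal $N(v)$, giving $N(v)\subset F'$. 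Thus $\Delta-v$ is again the clique complex of a chordal graph with exactly $c$ facets of size $d$, and the induction runs. This is packaged in the paper as the equivalence $(1)\Leftrightarrow(4)$ of the theorem just after the corollary in Section~\ref{linear}, where ``facet constructible'' is the formal name for ``tree of $(d-1)$-simplices glued along codimension-one faces.'' So your outlined induction is correct in spirit; you were only missing the invocation of Dirac to locate the leaf.

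One minor caution on the numerics: the deduction $\binom{s+c-2}{c-1}=c\Rightarrow s=2$ needs $c\ge 2$. When $c=1$ one has $\binom{s-1}{0}=1$ for every $s$, and indeed $I_\Delta=(x_1\cdots x_s)$ gives a complex with linear resolution and $\td(\Delta)=0$ that, for $s\ge 3$, is (a cone over) $\partial\sigma^{s-1}$ rather than a tree of simplices. The paper's Section~\ref{linear} argument glosses over the same boundary case; your write-up should either assume $c\ge 2$ or note this exception explicitly.
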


Another noteworthy consequence is that the inequalities in Theorem \ref{intro2} hold for any homogenous ideals with linear resolution, monomial or not, see Theorem \ref{lin}.  

The article is organized as follows.  Section \ref{s:background} deals with background and preliminaries, such as the connection between type and the Cohen-Macaulay property.  In Section \ref{graphs} we consider the case when $\Delta$ is a simple graph, and show the relationships between type and chordality.  In the final section, we consider the application of type to complexes whose Stanley-Reisner ideals have linear resolutions.

%
%

\begin{acknowledgement}
We thank the Mathematics Department at the University of Kansas, where most of this work was initiated and produced, for hospitable working conditions. This project was initially motivated by several conversations between the first author and Volkmar Welker, to whom we are grateful.  We also thank Bruno Benedetti  and Jos\'e Samper for helpful conversations, and Alexander Lazar, whose comments during a seminar talk suggested Proposition \ref{join}. 

Finally, we thank the two anonymous referees for their careful readings.  Their corrections and recommendations greatly improved this article. 

The first author is partially  supported by NSA grant H98230-16-1-001. 
\end{acknowledgement}

\section{Background and Preliminary results}\label{s:background}

Let $\Delta$ be a $(d-1)$-dimensional simplicial complex with vertex set $V$ (where $|V|=n$) and associated Stanley-Reisner ideal $I_\Delta$.  Recall that $I_\Delta$ is the ideal generated by monomials corresponding to non-faces of $\Delta$.  For $W \subseteq V$, we write $\Delta|_W$ to denote the subcomplex of $\Delta$ induced on the set $W$.  For a vertex $v \in V$, we write $\Delta - v$ as short for $\Delta|_{V\setminus \{v\}}$.  We use similar notation when dealing with graphs (indeed, a simple graph is a $1$-dimensional simplicial complex). In the interest of readability, we also write $\dim$ as short for $\dim_k$. 

The \emph{codimension} of $\Delta$ is the number of vertices in the complement of a maximal facet of $\Delta$.  As $\Delta$ is $(d-1)$-dimensional on $n$ vertices, we have: 
\[
\codim(\Delta) = n - d.  
\]

For ease of notation we shall write  $b_{i,j} (M)$ for $\dim \Tor_i^S(M, k)_j$. 
 We first recall Hochster's Formula (see, for instance, \cite{BH}), which expresses the multigraded Betti numbers in terms of the homologies of induced subcomplexes of $\Delta$.  
\begin{theorem}[Hochster's Formula]
The multigraded Betti numbers $b_{i,j}$ of $S/I_\Delta$ are given by:
\[
b_{i, j} (S / I_\Delta) = b_{i-1, j} (I_\Delta) = \sum_{|W| = j} \dim(\tilde{H}_{j-i - 1}(\Delta|_W)). 
\]
\end{theorem}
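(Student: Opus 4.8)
The plan is to compute $\Tor_i^S(S/I_\Delta,k)$ through the Koszul resolution of the residue field. Since $S = k[x_1,\dots,x_n]$, the field $k = S/(x_1,\dots,x_n)$ is resolved minimally by the Koszul complex $K_\bullet$, in which $K_p = \bigwedge^p S^n$ has a free basis $\{e_F : F\subseteq[n],\ |F|=p\}$ with $e_F$ in multidegree $\mathbf{e}_F := \sum_{i\in F}\mathbf{e}_i$, and with differential $e_F \mapsto \sum_{i\in F}\pm\, x_i\, e_{F\setminus i}$. Thus $\Tor_i^S(S/I_\Delta,k)$ is the $i$-th homology of the $\mathbb{Z}^n$-graded complex $K_\bullet\otimes_S S/I_\Delta$, and it is enough to analyze each multigraded strand separately.

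First I would record that, because $I_\Delta$ is generated by squarefree monomials, $\Tor_i^S(S/I_\Delta,k)_\ba = 0$ whenever $\ba$ is not a $0/1$ vector; this is immediate from the Taylor complex of $I_\Delta$, all of whose multidegrees are least common multiples of squarefree monomials and hence squarefree. So fix a squarefree $\ba$ and set $W = \{i : a_i = 1\} \subseteq V$. In the strand of $K_\bullet\otimes_S S/I_\Delta$ of multidegree $\ba$, the component in homological degree $p$ has $k$-basis the elements $x^{\ba - \mathbf{e}_F} e_F$ with $F\subseteq W$, $|F| = p$, and $x^{\ba - \mathbf{e}_F} = \prod_{i\in W\setminus F} x_i \notin I_\Delta$, i.e. with $W\setminus F$ a face of $\Delta|_W$. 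Reindexing $x^{\ba-\mathbf{e}_F}e_F$ by the face $G = W\setminus F$, which has dimension $|W| - p - 1$, one checks that the Koszul differential is carried — up to a uniform sign — to the simplicial coboundary of $\Delta|_W$. In other words the $\ba$-strand is the augmented simplicial cochain complex of $\Delta|_W$ (equivalently, its augmented chain complex with the homological grading reversed), so its homology in homological degree $i$ is $\tilde H^{|W|-i-1}(\Delta|_W;k)$, which over the field $k$ has the same dimension as $\tilde H_{|W|-i-1}(\Delta|_W;k)$.

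Putting these together gives $b_{i,\ba}(S/I_\Delta) = \dim\tilde H_{|\ba|-i-1}(\Delta|_{\mathrm{supp}(\ba)};k)$ for squarefree $\ba$ and $0$ otherwise; summing over all $\ba$ with $|\ba| = j$, that is, over all $W\subseteq V$ with $|W| = j$, produces the coarsely graded formula $b_{i,j}(S/I_\Delta) = \sum_{|W|=j}\dim\tilde H_{j-i-1}(\Delta|_W)$. The remaining identity $b_{i,j}(S/I_\Delta) = b_{i-1,j}(I_\Delta)$ falls out of the long exact $\Tor$ sequence of $0\to I_\Delta\to S\to S/I_\Delta\to 0$: since $\Tor_i^S(S,k) = 0$ for $i\geq 1$, one has $\Tor_i^S(S/I_\Delta,k)\cong\Tor_{i-1}^S(I_\Delta,k)$ for $i\geq 2$, and the low-index cases are checked directly, keeping an eye on the degenerate multidegree $\ba = 0$, where $\tilde H_{-1}$ of the complex $\{\emptyset\}$ accounts for $b_{0,0}(S/I_\Delta) = 1$.

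The step I expect to be the real work is the middle one: fixing the sign conventions and the reduced-homology bookkeeping so that the multidegree-$\ba$ strand of the Koszul complex is matched precisely with the augmented (co)chain complex of $\Delta|_W$ — in particular handling the empty face and homological degree $-1$ correctly. The rest is formal, and over a field one is free to pass between homology and cohomology wherever that streamlines the identification.
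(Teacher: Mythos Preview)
Your argument is correct and is precisely the standard proof of Hochster's Formula via the Koszul complex; the identification of the multidegree-$\ba$ strand with the augmented cochain complex of $\Delta|_W$ and the use of the Taylor resolution to dispose of non-squarefree multidegrees are both handled accurately. Note, however, that the paper does not actually prove this theorem: it is quoted as background with a reference to Bruns--Herzog \cite{BH}, so there is no ``paper's own proof'' to compare against --- you have supplied exactly the argument one finds in that reference (or in Miller--Sturmfels \cite{ms}).
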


Hochster's Formula is often used to state algebraic properties in terms of the associated Stanley-Reisner complex. Perhaps the best known of these results is Reisner's Criterion (see, for instance, \cite{BH}):

\begin{theorem}[Reisner's Criterion]
Let $\Delta$ be a simplicial complex and $I_\Delta$ its Stanley-Reisner ideal. Then $S/I_\Delta$ is Cohen-Macaulay (over $k$) iff for every face $F$ of $\Delta$, we have $\dim(\tilde{H}_i(\lk F)) = 0$ for all $i$ less than the dimension of $\lk F$. 
\end{theorem}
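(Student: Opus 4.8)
The plan is to rewrite the Cohen--Macaulay condition as a vanishing statement for the local cohomology of $k[\Delta] := S/I_\Delta$ and then read Reisner's condition directly off the combinatorial description of that local cohomology.

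By Grothendieck's (non-)vanishing theorem, for a finitely generated graded $S$-module $M$ one has $\operatorname{depth}(M) = \min\{\, i : H^i_{\mathfrak m}(M)\neq 0\,\}$ and $\dim(M) = \max\{\, i : H^i_{\mathfrak m}(M)\neq 0\,\}$, with $\mathfrak m = (x_1,\dots,x_n)$. Since $\dim(S/I_\Delta) = \dim\Delta + 1 = d$, the ring $S/I_\Delta$ is Cohen--Macaulay over $k$ if and only if $H^i_{\mathfrak m}(S/I_\Delta) = 0$ for all $i < d$. (Equivalently, via Auslander--Buchsbaum this is $\pd_S(S/I_\Delta) = n-d = \codim(\Delta)$, i.e.\ $b_{i,j}(S/I_\Delta) = 0$ for all $i > \codim(\Delta)$; this ``Tor'' reformulation underlies the alternative route mentioned below.)

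Next I would bring in the local-cohomology form of Hochster's Formula, the companion of the Tor-version quoted above: in the $\mathbb Z^n$-grading, $H^i_{\mathfrak m}(S/I_\Delta)_{\mathbf a} = 0$ unless every coordinate of $\mathbf a$ is $\le 0$, and for such $\mathbf a$, with $F = \{\, v : a_v\neq 0\,\}$, one has $H^i_{\mathfrak m}(S/I_\Delta)_{\mathbf a}\cong\tilde H^{\,i-|F|-1}(\lk_\Delta F;k)$ (the right side being $0$ when $F\notin\Delta$); this is obtained by splitting the \v{C}ech complex that computes $H^\bullet_{\mathfrak m}(S/I_\Delta)$ into multigraded strands and matching each strand, up to a shift, with the augmented cochain complex of a link, or alternatively from the Tor-version plus graded local duality over $S$. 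Granting it, and using that over a field reduced cohomology and homology agree degreewise, ``$H^i_{\mathfrak m}(S/I_\Delta) = 0$ for all $i < d$'' becomes ``$\tilde H_m(\lk_\Delta F;k) = 0$ for every face $F\in\Delta$ and every $m < d-|F|-1$.'' As $\tilde H_m(\lk_\Delta F)$ vanishes automatically for $m$ above $\dim\lk_\Delta F$, this is exactly Reisner's condition $\tilde H_m(\lk_\Delta F) = 0$ for $m < \dim\lk_\Delta F$ --- provided one knows $\dim\lk_\Delta F = d-1-|F|$, i.e.\ that $\Delta$ is pure.

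I expect the purity point to be the main obstacle. For ``Cohen--Macaulay $\Rightarrow$ Reisner'' it suffices that a Cohen--Macaulay $S/I_\Delta$ is unmixed, whence $\Delta$ is pure; for the reverse implication one must show \emph{directly} that Reisner's condition forces $\Delta$ to be pure (so the index-matching can be reversed), the cleanest argument being that a non-pure $\Delta$ has a face $F$ whose link is disconnected or has a facet of too-small dimension, producing nonzero $\tilde H_m(\lk_\Delta F)$ with $m < \dim\lk_\Delta F$. One also has to set up the local-cohomology Hochster formula carefully. An alternative, fully self-contained route: from ``$b_{i,j}(S/I_\Delta) = 0$ for $i > \codim(\Delta)$'' and the Tor-version of Hochster's Formula, restate the condition as vanishing of $\tilde H_m(\Delta|_W)$ in a range of low degrees for all $W\subseteq V$, then pass between this ``induced-subcomplex'' condition and the ``link'' condition by induction on $|V|$, using the reduced Mayer--Vietoris sequence of $\Delta = \operatorname{st}_\Delta(v)\cup(\Delta-v)$ (the two pieces meet in $\lk_\Delta(v)$, and the star is contractible) together with $\lk_{\Delta|_W}(F) = (\lk_\Delta F)|_{W\setminus F}$; in that approach the obstacle becomes tracking the degree shifts through the induction.
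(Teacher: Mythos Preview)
The paper does not prove Reisner's Criterion at all; it simply quotes the statement as a classical result with a pointer to \cite{BH}, and then uses it (and Hochster's formula) as black boxes throughout. So there is no ``paper's own proof'' to compare against.

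That said, your plan is one of the two standard proofs and is correct in outline. Passing through Grothendieck vanishing and the local-cohomology form of Hochster's formula is exactly how Bruns--Herzog do it, and your identification of the one genuine subtlety --- matching the index $d-1-|F|$ coming from local cohomology with $\dim\lk_\Delta F$ in the statement --- is on point. For the direction ``Reisner $\Rightarrow$ Cohen--Macaulay'' you do need purity, and the cleanest way to get it is the one you gesture at: Reisner's condition is inherited by links, so by induction on dimension every $\lk_\Delta F$ is Cohen--Macaulay, hence pure; applying this to vertex links and using that Reisner at $F=\varnothing$ forces $\Delta$ to be connected (when $d\ge 2$) gives that all vertex links have the same dimension $d-2$, whence $\Delta$ is pure. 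Your alternative route via the Tor-form of Hochster plus the Mayer--Vietoris sequence for $\operatorname{st}_\Delta(v)\cup(\Delta-v)$ is also a well-known and valid argument; it trades the local-cohomology machinery for a somewhat longer combinatorial induction, but avoids having to set up the \v{C}ech computation.
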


Hochster's Formula immediately implies severals facts about the modified type and type defect. 

\begin{corollary} \label{basic} Let $\Delta$ be a $(d-1)$-dimensional complex with $n$ vertices and let $c=n-d$ be its codimension. It is immediate from the definitions that $\type(\Delta) \geq 0$ for any $\Delta$, and that $\type(\Delta) < b_c(S/I_\Delta)$ iff $\Delta$ is a simplex. Furthermore, we have the following:  
\begin{enumerate}
\item $\type(\Delta)= b_{c}(S/I_{\Delta})>0$ if and only if $\Delta$ is not a simplex. 
\item When $\Delta$ is a simplex, $\type(\Delta) = 0$ and $\td(\Delta)=0$. 
\item If $\Delta$ is Gorenstein, $\td(\Delta)=0$ if and only if $\Delta$ is a $(d-1)$-simplex or the boundary of a $d$-simplex. 
\end{enumerate}
\end{corollary}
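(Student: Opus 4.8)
The plan is to route everything through a single consequence of Hochster's Formula, the value of $b_{c,c}(S/I_\Delta)$. Hochster gives $b_{c,c}(S/I_\Delta) = \sum_{|W| = c}\dim \tilde{H}_{-1}(\Delta|_W)$, and $\tilde{H}_{-1}$ of a simplicial complex is nonzero --- and then one-dimensional --- exactly when the complex has no vertices. It is convenient to record first the equivalence ``$\Delta$ is a simplex $\iff c = 0$'': a facet of cardinality $n$ must be the whole vertex set $V$, forcing $\Delta = 2^V$, and conversely $c = 0$ produces such a facet. Now split into cases. If $\Delta$ is not a simplex, then $c \ge 1$; since every singleton is a face of $\Delta$, every induced subcomplex on a nonempty vertex set contains a vertex, so each summand above vanishes, $b_{c,c}(S/I_\Delta) = 0$, and therefore $\type(\Delta) = b_c(S/I_\Delta)$. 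If $\Delta$ is a simplex, then $c = 0$, $I_\Delta = 0$, and $\Tor_0^S(S,k) = k$ lives in degree $0$, so $b_{0,0}(S/I_\Delta) = b_0(S/I_\Delta) = 1$ and $\type(\Delta) = 0$.

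The ``immediate'' remarks and parts (1)--(2) now follow quickly. Always $b_c(S/I_\Delta) = \sum_j b_{c,j}(S/I_\Delta) \ge b_{c,c}(S/I_\Delta) \ge 0$, so $\type(\Delta) \ge 0$; and by the case split $b_{c,c}(S/I_\Delta) > 0$ exactly when $\Delta$ is a simplex, so $\type(\Delta) < b_c(S/I_\Delta)$ exactly then. Part (2) is the second case above, together with $\td(\Delta) = \type(\Delta) - c = 0$. For part (1): if $\Delta$ is not a simplex, then $I_\Delta$ is a nonzero proper ideal whose grade equals its height $c$ (as $S$ is Cohen--Macaulay), so $\operatorname{Ext}^c_S(S/I_\Delta, S) \ne 0$; since this is a subquotient of $\operatorname{Hom}_S(F_c, S)$ for any free resolution $F_\bullet$ of $S/I_\Delta$, the free module $F_c$ is nonzero, i.e.\ $b_c(S/I_\Delta) > 0$, and with the first paragraph $\type(\Delta) = b_c(S/I_\Delta) > 0$. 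Conversely a simplex has $\type(\Delta) = 0 \ne 1 = b_c(S/I_\Delta)$, which completes the equivalence.

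For part (3), suppose $\Delta$ is Gorenstein, so $S/I_\Delta$ is Cohen--Macaulay of projective dimension $c$ and Cohen--Macaulay type $b_c(S/I_\Delta) = 1$. If $\Delta$ is a simplex, part (2) gives $\td(\Delta) = 0$. Otherwise the first paragraph gives $\type(\Delta) = b_c(S/I_\Delta) = 1$, hence $\td(\Delta) = 1 - c$, so $\td(\Delta) = 0$ if and only if $c = 1$. When $c = 1$, $S/I_\Delta$ is Cohen--Macaulay of codimension and type one, so its minimal free resolution is $0 \to S(-a) \to S \to S/I_\Delta \to 0$; thus $I_\Delta$ is principal, forcing $I_\Delta = (x_{i_1}\cdots x_{i_r})$, i.e.\ $\Delta$ has the unique minimal non-face $N = \{i_1,\dots,i_r\}$ and facets $V\setminus\{i_1\},\dots,V\setminus\{i_r\}$. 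Invoking the classification of Gorenstein Stanley--Reisner complexes (equivalently, the structure theorem presenting $\Delta$ as the join of its core, a $k$-homology sphere, with a full simplex) one concludes $N = V$, so $I_\Delta = (x_1\cdots x_n)$ and $\Delta = \partial\Delta^{n-1}$, the boundary of the $d$-simplex ($d = n-1$). Conversely $\partial\Delta^{n-1}$ is Gorenstein of codimension one, so its type defect is $0$.

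The only real obstacle is part (3); the rest is bookkeeping around Hochster's Formula once $b_{c,c}(S/I_\Delta)$ is computed. Within (3) the substantive point is the passage ``Gorenstein of codimension one $\Rightarrow$ boundary of a simplex'', which genuinely needs the structure theory of Gorenstein complexes: without it, a cone over the boundary of a simplex (already the path on three vertices, with $I_\Delta = (x_1 x_2)$) also has type defect zero, so one must read ``Gorenstein'' here in the sense where $\Delta$ coincides with its core (i.e.\ as Gorenstein$^*$, a $k$-homology sphere), for which the codimension-one condition does force $\Delta = \partial\Delta^{n-1}$.
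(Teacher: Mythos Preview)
Your argument follows the same route as the paper's: both compute $b_{c,c}(S/I_\Delta)$ via Hochster's formula, observe that it vanishes exactly when $c\ge 1$ (equivalently, when $\Delta$ is not a simplex), and read off the dichotomy from there. You go beyond the paper in two respects. First, you supply an explicit reason that $b_c(S/I_\Delta)>0$ when $\Delta$ is not a simplex (via $\operatorname{grade} I_\Delta = c$ and the nonvanishing of $\operatorname{Ext}^c$), whereas the paper's proof of (1) only establishes $b_{c,c}=0$ and leaves the positivity implicit. Second, and more substantively, you correctly flag that part (3) as stated is false: your example $I_\Delta=(x_1x_2)\subset k[x_1,x_2,x_3]$ (the path on three vertices) is a Gorenstein complex of codimension one with $\td(\Delta)=0$ that is neither a simplex nor a simplex boundary. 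The paper's proof glosses over exactly this point, asserting without justification that ``$c=1$, which means $\Delta$ is a boundary of a simplex.'' Your proposed fix --- read ``Gorenstein'' as Gorenstein$^*$, i.e.\ require $\Delta$ to equal its core --- is the correct one, and under that reading your argument for (3) is complete.
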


\begin{proof}
For $(1)$, we need to show that $\dim \Tor_c^S(S/ I_\Delta,k)_c=0$ if and only if $\Delta$ is not a simplex. By Hochster's formula, this is $$b_{c,c}(S/I_{\Delta}) = \sum_{|W| = c} \dim(\tilde{H}_{-1}(\Delta|_W))$$
This sum is $0$ if and only if $\Delta_W$ is non-empty for each subset $W$ of size $c=n-d$. This clearly happens if and only if $n>d$, namely, $\Delta$ is not a simplex. 

Part $(2)$ is clear. For $(3)$, as $S/I_{\Delta}$ is Gorenstein, $b_c(S/I_{\Delta})=1$. Thus either $\Delta$ is a simplex (in which case $\type(\Delta) = 0$ and $c = 0$), or $c=1$, which means $\Delta$ is a boundary of a simplex.  
\end{proof}

\begin{remark}\label{typedefn}
Let $\Delta$ be a $(d-1)$-dimensional simplicial complex on $n$ vertices.  We have

\begin{align}\label{firsthoch}
\type(\Delta) =  \sum_{j >c} b_{c, j }(S/I_\Delta) = \sum_{j >c} \sum_{|W| = j} \dim(\tilde{H}_{j - c -1}(\Delta|_W)).
\end{align}

Substituting $j = n - d + l$ changes Equation \ref{firsthoch} to

\begin{align}\label{main}
\type(\Delta) = \sum_{l > 0} \sum_{|W| = c+l } \dim(\tilde{H}_{l-1} (\Delta|_W)).
\end{align}

We can also use the Alexander dual form of Hochster's Formula to get

\[
\type(\Delta) = \sum_{j> 0} \sum_{|W| = c + j, W \notin \Delta} \dim(\tilde{H}_{c-2}(\lk_{\Delta^\vee} (V \setminus W))).
\]

\end{remark}

\begin{observation}\label{2cmobs}
If $\Delta$ is 2-Cohen-Macaulay and $(d-1)$-dimensional, then
\[
\type(\Delta) = \dim (\tilde{H}_{d-1}(\Delta)).  
\] 

This is simply because of Hochster's formula and the fact that $\tilde H_{j-c-1} ({\Delta|_W})=0$ unless $|W|=n$. Note that $2$-Cohen-Macaulay complexes include matroid complexes and triangulations of spheres.  

\end{observation}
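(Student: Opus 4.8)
The plan is to obtain the Observation directly from Equation~\ref{main}. That formula reads
\[
\type(\Delta)=\sum_{l>0}\ \sum_{|W|=c+l}\dim\tilde{H}_{l-1}(\Delta|_W),
\]
and since $|W|=c+l$ we have $l-1=|W|-c-1$; so it suffices to prove that $\tilde{H}_{|W|-c-1}(\Delta|_W;k)=0$ for every \emph{proper} subset $W\subsetneq V$. Granting that, the only surviving summand is the one with $W=V$, i.e.\ $l=d\geq 1$, which contributes exactly $\dim\tilde{H}_{d-1}(\Delta)$, as claimed. (We assume, as is customary, that $\dim\Delta\geq0$.)

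For the vanishing, first set aside the case where $\Delta$ is a simplex: there $\type(\Delta)=0$ by Corollary~\ref{basic} and $\tilde{H}_{d-1}(\Delta)=0$ as well, so we may assume $c\geq1$. Fix $W\subsetneq V$ and choose a vertex $v\in V\setminus W$. The key move is to replace $\Delta$ by $\Gamma:=\Delta-v=\Delta|_{V\setminus\{v\}}$: by the definition of $2$-Cohen-Macaulayness, $\Gamma$ is Cohen-Macaulay over $k$ and still $(d-1)$-dimensional, hence it is a complex on $n-1$ vertices of codimension $c-1$, and so Auslander--Buchsbaum yields $\pd_{S'}(S'/I_\Gamma)=c-1$, where $S'=k[x_u:u\in V\setminus\{v\}]$. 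In particular $b_{c,j}(S'/I_\Gamma)=0$ for every $j$. Now apply Hochster's Formula to $\Gamma$, using that $\Gamma|_U=\Delta|_U$ whenever $U\subseteq V\setminus\{v\}$:
\[
0=b_{c,\,|W|}(S'/I_\Gamma)=\sum_{\substack{U\subseteq V\setminus\{v\}\\ |U|=|W|}}\dim\tilde{H}_{|W|-c-1}(\Delta|_U).
\]
Every summand is nonnegative and $W$ is one of the index sets $U$, so $\tilde{H}_{|W|-c-1}(\Delta|_W;k)=0$, which completes the argument.

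I expect the only real pitfall to be the temptation to prove too much: there is no uniform vanishing of the form ``$\tilde{H}_i(\Delta|_W;k)=0$ for all $i<d-1$'' for a $2$-Cohen-Macaulay $\Delta$ --- induced subcomplexes of such a $\Delta$ can be disconnected, for instance --- so one must aim precisely at the single degree $|W|-c-1$ that appears in Equation~\ref{main}. That degree is exactly the one at which the Cohen-Macaulayness of $\Gamma=\Delta-v$ becomes useful: because $\Gamma$ keeps the dimension of $\Delta$ but loses one vertex, its Hochster vanishing range beats the one coming from $\Delta$ itself by one step, and that single step is what annihilates every term of Equation~\ref{main} except $W=V$. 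The rest is bookkeeping on boundary cases: $c=0$ (the simplex, treated separately), $c=1$ (where $\Gamma$ is a full simplex, so $\pd_{S'}(S'/I_\Gamma)=0=c-1$ and the computation is unchanged), and checking that the operative notion of $2$-Cohen-Macaulay is the standard one, namely that $\Delta-v$ is Cohen-Macaulay of dimension $d-1$ for each vertex $v$ --- this is the sole hypothesis on $\Delta$ that the proof uses.
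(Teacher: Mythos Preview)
Your argument is correct and follows the same route the paper sketches: plug the $2$-Cohen--Macaulay vanishing $\tilde H_{|W|-c-1}(\Delta|_W)=0$ for $W\subsetneq V$ into Equation~\ref{main}. The paper merely asserts this vanishing as a ``fact,'' whereas you supply a clean proof of it (delete a vertex $v\notin W$, use that $\Delta-v$ is Cohen--Macaulay of the same dimension but one smaller codimension, and read off the vanishing from Hochster applied to $\Delta-v$ at homological degree $c$). Your handling of the edge cases $c=0$ and $c=1$ is also fine.
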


Let $\Delta_1, \Delta_2$ be simplicial complexes with Stanley-Reisner rings $k[\Delta_1]= S_1/I_{\Delta_1}$ and $k[\Delta_2]= S_2/I_{\Delta_2}$ respectively. Let $\Delta$ be the join $\Delta_1 * \Delta_2$. The Stanley Reisner ring of $\Delta$ is $k[\Delta_1]\otimes_k k[\Delta_2]$, a quotient of $S=S_1\otimes_k S_2$.   Then if $P_{\Delta}(t) : = \sum_{i\geq 0} b_i(S/I_\Delta)t^i$ is the Poincar\'e series of $k[\Delta]$, it is easy to see that $$P_{\Delta}(t) = P_{\Delta_1}(t)P_{\Delta_2}(t).$$ 

Now, let $c_1,c_2$ be the codimensions of $\Delta_1, \Delta_2$ respectively. Let $t_1 = b_{c_1}(S_1/I_{\Delta_1})$ and  $t_2 = b_{c_2}(S_1/I_{\Delta_2})$ be the types. Then $c = \codim \Delta = c_1+c_2$ and  the multiplicative formula above for Poincar\'e series tells us that 
$$b_c(S/I_{\Delta}) \geq t_1t_2 $$ 
where equality happens if and only if $\Delta_1, \Delta_2$ are Cohen-Macaulay. It follows immediately that:
\begin{proposition}\label{join}
Let $\Delta = \Delta_1 * \Delta_2$. Then: 
\begin{enumerate}
\item $\td(\Delta) = \td(\Delta_1)$ if $\Delta_2$ is a simplex. 
\item $\td(\Delta) \geq  \td(\Delta_1)+ \td(\Delta_2)-1$. Equality happens if and only if  $\Delta_1, \Delta_2$ are Cohen-Macaulay and one of them is Gorenstein but not a simplex. 

\end{enumerate}
\end{proposition}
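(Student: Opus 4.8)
The plan is to derive both statements from the multiplicativity $P_\Delta(t)=P_{\Delta_1}(t)P_{\Delta_2}(t)$ and from Corollary \ref{basic}, which controls the gap between $\type$ and the total Betti number $b_c$. For (1): if $\Delta_2$ is a simplex then $I_{\Delta_2}=0$, so $S_2/I_{\Delta_2}$ is free, $P_{\Delta_2}(t)=1$, and hence $b_i(S/I_\Delta)=b_i(S_1/I_{\Delta_1})$ for every $i$; also $c=c_1+c_2=c_1$. Since $\Delta$ is a simplex exactly when $\Delta_1$ is, Corollary \ref{basic}(1)--(2) yields $\type(\Delta)=\type(\Delta_1)$ in either case (both are $0$ when $\Delta_1$ is a simplex, and both equal $b_{c_1}$ otherwise), and subtracting the common value $c_1$ gives $\td(\Delta)=\td(\Delta_1)$.

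For (2), I would first dispatch the degenerate situation via (1): if one of $\Delta_1,\Delta_2$, say $\Delta_2$, is a simplex, then $\td(\Delta)=\td(\Delta_1)=\td(\Delta_1)+\td(\Delta_2)>\td(\Delta_1)+\td(\Delta_2)-1$, so the inequality is strict and no equality occurs there. Assume henceforth that neither $\Delta_i$ is a simplex. Then Corollary \ref{basic}(1) gives $\type(\Delta)=b_c(S/I_\Delta)$ and $\type(\Delta_i)=t_i:=b_{c_i}(S_i/I_{\Delta_i})\geq 1$, and using $c=c_1+c_2$ a short computation yields
\[
\td(\Delta)-\td(\Delta_1)-\td(\Delta_2)=b_c(S/I_\Delta)-t_1-t_2 .
\]
Invoking the two facts established just before the proposition --- $b_c(S/I_\Delta)\geq t_1t_2$, with equality if and only if $\Delta_1$ and $\Delta_2$ are both Cohen-Macaulay --- together with the identity $t_1t_2-t_1-t_2=(t_1-1)(t_2-1)-1\geq -1$, we conclude that the right-hand side above is at least $-1$, which is the asserted bound.

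For the equality statement, equality in the chain forces both $b_c(S/I_\Delta)=t_1t_2$ (so $\Delta_1,\Delta_2$ are Cohen-Macaulay) and $(t_1-1)(t_2-1)=0$ (so $t_i=1$ for some $i$); since a Cohen-Macaulay quotient of type $1$ is Gorenstein, and it is non-simplicial by our standing assumption, one of $\Delta_1,\Delta_2$ is Gorenstein but not a simplex. Conversely, if $\Delta_1,\Delta_2$ are Cohen-Macaulay and (say) $\Delta_1$ is Gorenstein and not a simplex, then $t_1=b_{c_1}(S_1/I_{\Delta_1})=1$, so both inequalities are equalities; combined with the strictness observed in the degenerate case this gives the biconditional.

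I do not expect a real obstacle, since the two main inputs (Poincar\'e-series multiplicativity and the bound $b_c\geq t_1t_2$ with its equality condition) are already available; the only point demanding care is the bookkeeping around the modified type --- specifically, routing the simplex cases through part (1), where the inequality is automatically strict, rather than through the equality clause $b_c=t_1t_2$, which is the delicate step needed for the equality characterization to come out as stated.
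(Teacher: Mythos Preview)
Your proposal is correct and follows exactly the approach the paper indicates: the paper sets up the Poincar\'e-series identity $P_\Delta(t)=P_{\Delta_1}(t)P_{\Delta_2}(t)$ and the inequality $b_c(S/I_\Delta)\geq t_1t_2$ (with its Cohen--Macaulay equality condition) immediately before the proposition, then simply writes ``It follows immediately that'' and states the result. You have supplied precisely the details the paper omits, including the bookkeeping with Corollary~\ref{basic} and the separate handling of the simplex case via part~(1).
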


A rather non-trivial fact about type defect is the following.  

\begin{theorem}\label{typefact}
If $\td(\Delta) \leq 0$ (i.e., if $\type(\Delta) \leq \codim(\Delta)$), then $\Delta$ is Cohen-Macaulay.  
\end{theorem}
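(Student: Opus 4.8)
The plan is to use Reisner's Criterion: I want to show that for every face $F$ of $\Delta$, the link $\lk F$ has vanishing reduced homology below its top dimension. Since $\td(\Delta) \le 0$ means $\type(\Delta) \le c$, I want to leverage Equation~\eqref{main}, which writes $\type(\Delta) = \sum_{l>0}\sum_{|W|=c+l}\dim \tilde H_{l-1}(\Delta|_W)$ as a sum of nonnegative contributions, and argue that if too much homology appears in some link, the sum on the right must exceed $c$. The natural reduction is to induct on $n$ (or on the number of vertices), using the fact that $\td$ behaves well under taking induced subcomplexes and that links of vertices of $\Delta$ are themselves induced subcomplexes of a vertex-deleted complex.

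The key steps, in order: (i) First handle the vertex links. For a vertex $v$, note $\lk_\Delta v = (\Delta - v)|_{N(v)}$ where $N(v)$ is the neighborhood, so a homology class in $\lk_\Delta v$ in degree $l-1$ with $l-1 < \dim \lk_\Delta v$ pulls back (via the decomposition $\tilde H_*(\Delta|_W)$ vs.\ $\tilde H_*((\Delta-v)|_W)$ and $\tilde H_*(\lk_\Delta v|_{W})$ from the Mayer--Vietoris / cone decomposition $\Delta|_{W\cup v} = (\Delta-v)|_W \cup (v * \lk_\Delta v|_W)$) to nonzero Betti contributions. (ii) Show that $\type$ is monotone under adding a vertex in the relevant range, so that $\type(\Delta) \ge \type(\Delta - v) + (\text{contribution of links of } v)$; more precisely I expect an inequality of the shape $\type(\Delta) \ge \type(\Delta-v) + 1$ whenever some link homology obstruction is present, combined with $\codim(\Delta - v) = \codim(\Delta) - 1$ or $= \codim(\Delta)$ depending on whether $v$ is a cone point, to push the induction through. (iii) Conclude: if $\Delta$ were not Cohen--Macaulay, Reisner gives a face $F$ with $\tilde H_i(\lk F) \ne 0$ for some $i < \dim \lk F$; walking up from the minimal such $F$ (so $F$ is a vertex or the empty face after passing to an induced subcomplex) produces enough independent nonzero summands in Equation~\eqref{main} to force $\type(\Delta) > c$, contradicting $\td(\Delta) \le 0$. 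The base case is $\Delta$ a simplex, where $\td = 0$ and $\Delta$ is trivially Cohen--Macaulay.

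The main obstacle I anticipate is step~(ii): carefully bookkeeping how $\type$ changes under vertex deletion, since $\codim$ can either stay the same or drop by one, and the homology of $\Delta|_W$ is related to that of $(\Delta-v)|_W$ and $\lk_\Delta v|_{W\cap N(v)}$ only through an exact sequence, not a direct sum. I will need a clean lemma of the form ``$\type(\Delta) \ge \type(\Delta - v) + \delta$'' where $\delta$ accounts for new homology introduced at $v$, and I will need it to interact correctly with the $-1$ shift in codimension in the cone case. One safe route is to reduce immediately to the case where $\Delta$ has a non-acyclic link at the empty face or at a vertex by replacing $\Delta$ with a minimal induced subcomplex witnessing failure of Cohen--Macaulayness (such a minimal witness still satisfies $\td \le 0$ if we know $\td$ is monotone on induced subcomplexes, which should follow from the nonnegativity of the summands in Equation~\eqref{main} together with the codimension bookkeeping), and then the homology in question lives in $\tilde H_*(\Delta|_W)$ for $W$ a large subset, contributing directly and visibly to the sum defining $\type(\Delta)$.
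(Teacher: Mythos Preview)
Your proposal takes a completely different route from the paper.  The paper's proof is two lines: when $\Delta$ is a simplex it is trivially Cohen--Macaulay, and otherwise $\type(\Delta)=b_c(S/I_\Delta)$ by Corollary~\ref{basic}, so the hypothesis reads $b_c(S/I_\Delta)\le c$; the conclusion then follows from the classical Evans--Griffith Syzygy Theorem, cited as \cite[Corollary~9.5.6]{BH}.  That result implies that if $\pd_S M>i$ then the $i$-th syzygy of $M$ has rank at least $i$, whence $b_i(M)\ge i+1$; contrapositively, $b_c(S/I_\Delta)\le c$ forces $\pd_S(S/I_\Delta)=c$, i.e.\ Cohen--Macaulayness.

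Your combinatorial strategy via Reisner's Criterion and induction on vertices has a genuine gap, and it is precisely the one you flag.  The monotonicity you invoke in your ``safe route'' is false: $\td$ is \emph{not} monotone on induced subcomplexes, and the heuristic you offer (``nonnegativity of the summands in Equation~\eqref{main} together with the codimension bookkeeping'') breaks down because deleting a vertex generically shifts the codimension from $c$ to $c-1$, so the summands in $\type(\Delta)$ and $\type(\Delta-v)$ indexed by the same $W\not\ni v$ sit in \emph{different} homological degrees, namely $\tilde H_{|W|-c-1}(\Delta|_W)$ versus $\tilde H_{|W|-c}(\Delta|_W)$, and cannot be compared termwise.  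Concretely, $\td(C_5)=-2$ while its induced subgraph $P_4$ has $\td(P_4)=0$.  Your inductive inequality in step~(ii), $\type(\Delta)\ge\type(\Delta-v)+\delta$, runs into exactly the same degree shift, and establishing it in the form you need would amount to a combinatorial reproof of (this case of) the Syzygy Theorem---a substantial project, not a routine lemma.  In short, the paper invokes a deep algebraic result as a black box; your outline is effectively an attempt to reprove that black box by hand, with the hard step still open.
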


\begin{proof}
If $\Delta$ is a simplex, then it is Cohen-Macaulay. Otherwise, the modified type is just $b_c(S/I_{\Delta})$, and the assertion  is a consequence of the classical Syzygy Theorem, see for example \cite[Corollary 9.5.6]{BH}. 
\end{proof}

It's worth noting that the converse to the above is false. Using Equation \ref{gengraphtype} from Section \ref{graphs}, it's easy to see that $\td(K_n) = \binom{n}{2} -2n + 3$, where $K_n$ is the complete graph on $n$ vertices, viewed as a $1$-dimensional complex. So there exist Cohen-Macaulay complexes with arbitrarily high type defect. 

\begin{observation}\label{obsCM}
In the glueing result below we need to consider  complexes inside a bigger set of vertices. If $\Delta$ is Cohen-Macaulay, one can compute $\type(\Delta)$ using $\Tor_{c'}^{S'}(k[\Delta], k)$ with $S= k[x_1,\dots, x_n, y_1,\dots, y_m]$ where the $y_i$s are vertices not in $\Delta$. This is because $\Tor_c$ is the last non-zero $\Tor$, and this number is unchanged if we throw in more variables. Note that this is not true if  we drop the Cohen-Macaulayness assumption. For example, let $\Delta$ be the union of disjoint  edges  $\{xy\}$ and  $\{zw\}$.  Then $k[\Delta]= k[x,y,z,w]/(xz,xw, yw,yw)$ with $c=2$ and $\type(\Delta) = 4$.  If we introduce a new vertex $t$, then we have $S'= k[x,y,z,w,t]$,  $k[{\Delta}] =  S'/(xz, xw, yw,yw,t)$, the new codimension is $c'=3$, and $\dim \Tor_3^{S'}(k[\Delta], k) = 5$.  This is the main reason why in Theorem \ref{mainglue} we need to assume the complexes are Cohen-Macaulay. 
\end{observation}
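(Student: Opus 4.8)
The disjoint-edges computation is only an illustrative example and requires no proof; the content to establish is the statement for Cohen--Macaulay $\Delta$. Write $R = S/I_\Delta$ and let $F_\bullet \to R$ be its minimal graded free resolution over $S$. Let $\Delta'$ be the complex on the enlarged vertex set in which each $y_i$ is declared a non-face, so that $I_{\Delta'} = I_\Delta S' + (y_1,\dots,y_m)$ and $S'/I_{\Delta'} \cong R$ as a ring. Since $\Delta'$ has the same facets as $\Delta$ but sits in an ambient set of size $n+m$, its codimension is $c' = c+m$. The plan is simply to write down the minimal graded free resolution of $S'/I_{\Delta'}$ over $S'$ explicitly and read off its last free module.

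First I would observe that $S'/I_{\Delta'} = R \otimes_k \bigl(k[y_1,\dots,y_m]/(y_1,\dots,y_m)\bigr)$ as a module over $S' = S \otimes_k k[y_1,\dots,y_m]$. Hence, by the Künneth formula for $\Tor$ over a field, a minimal graded free resolution of $S'/I_{\Delta'}$ over $S'$ is the tensor product (over $k$, then regarded over $S'$) of $F_\bullet$ with the Koszul complex $K_\bullet$ resolving $k$ over $k[y_1,\dots,y_m]$; equivalently one may argue by induction on $m$, at each stage forming the mapping cone of multiplication by $y_i$ on the current resolution, which is acyclic because $y_i$ is a nonzerodivisor on the relevant polynomial extension. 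Minimality is automatic since every differential in sight has entries in the maximal ideal of $S'$. Now the Cohen--Macaulay hypothesis enters: it forces $\pd_S R = c$, so $F_\bullet$ has length exactly $c$ with $F_c \neq 0$, while $K_\bullet$ has length $m$ with $K_m$ free of rank $\binom{m}{m}=1$. Therefore the tensor-product resolution has length exactly $c + m = c'$, its top term is $\bigoplus_{p+q=c'} F_p \otimes_k K_q = F_c \otimes_k K_m$ (the only summand with $p \leq c$ and $q \leq m$), and this free module has rank $b_c(R)\cdot 1$. Consequently $\dim_k \Tor^{S'}_{c'}(k[\Delta],k) = b_{c'}(S'/I_{\Delta'}) = b_c(S/I_\Delta)$, which is exactly the asserted invariance of the last nonzero $\Tor$ under adding variables.

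Finally I would reconcile this with $\type$ itself, using Corollary \ref{basic} and Hochster's Formula. If $\Delta$ is not a simplex then $c\geq 1$, so $\Tor^S_c(k[\Delta],k)_c = 0$ and $\type(\Delta) = b_c(S/I_\Delta)$; on the other side, $\Tor^{S'}_{c'}(k[\Delta],k)_{c'} = \sum_{|W|=c'} \dim \tilde H_{-1}(\Delta'|_W)$ vanishes because a set $W$ of size $c' = c+m > m$ cannot avoid all $n$ genuine vertices of $\Delta'$, so $\type(\Delta') = b_{c'}(S'/I_{\Delta'}) = \type(\Delta)$. When $\Delta$ is a simplex, $S'/I_{\Delta'}$ is a complete intersection on $y_1,\dots,y_m$, so $b_{c'} = b_{c',c'} = 1$ and $\type(\Delta') = 0 = \type(\Delta)$ — this is precisely the situation in which the ``modified'' correction term in the definition of $\type$ earns its keep. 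The argument involves no real difficulty beyond careful bookkeeping of conventions, chiefly tracking genuine versus ``ghost'' vertices and the resulting codimension $c' = c+m$; the one conceptual point worth underlining is that Cohen--Macaulayness is used exactly to pin down $\pd_S R = c$, and that without it the resolution would be strictly longer and the top $\Tor$ would pick up extra contributions, as the disjoint-edges example demonstrates.
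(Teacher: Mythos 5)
Your proposal is correct and is essentially the paper's own justification made precise: the paper disposes of the Cohen--Macaulay case with the one-line remark that $\Tor_c$ is the last nonzero $\Tor$ and that its dimension is unchanged upon adding variables, which is exactly what your tensor-with-the-Koszul-complex computation (together with the Hochster-formula check that the degree-$c'$ correction term and the simplex case cause no discrepancy) establishes in detail.
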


Our next result concerns how the type and type defect behave when Cohen-Macaulay complexes are glued together.  It gives a more precise version of   \cite[Lemma 3.5]{murai}. We need this extension, as we need to know what happens when one of the complexes glued is a simplex.

If $\Delta_1$ and $\Delta_2$ are two complexes on disjoint vertex sets, and $E_1$ and $E_2$ are faces of $\Delta_1$ and $\Delta_2$, respectively, we write $\Delta\sqcup_{E_1= E_2} \Delta'$ to denote the complex obtained by identifying $E_1$ and $E_2$.  

\begin{theorem}\label{mainglue}
Let $\Delta_1$ and $\Delta_2$ be Cohen-Macaulay complexes of dimension $(d-1)$, and let $E_1$ and $E_2$ be $(\ell-1)$-dimensional faces of $\Delta_1$ and $\Delta_2$, respectively.  Then 
\[
\type(\Delta \sqcup_{E_1 = E_2} \Delta') = \type(\Delta_1) + \type(\Delta_2) + \binom{n-\ell}{c+1}
\]
Here $c= n-d$, where $n$ is the number of vertices in $\Delta \sqcup_{E_1 = E_2} \Delta'$.

\end{theorem}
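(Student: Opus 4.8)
The plan is to view both $\Delta_1$ and $\Delta_2$ as complexes on the common vertex set $V$ of $\Delta := \Delta_1 \sqcup_{E_1 = E_2} \Delta_2$ (so each $\Delta_j$ acquires $n - n_j$ ``ghost'' vertices, writing $n_j := |V(\Delta_j)|$) and to run the long exact $\Tor$-sequence attached to the decomposition $\Delta = \Delta_1 \cup \Delta_2$. Since the only common faces of $\Delta_1$ and $\Delta_2$ are the subsets of the identified face $E$ (with $\ell = |E|$), we have $\Delta_1 \cap \Delta_2 = 2^E$; translating union/intersection of complexes into intersection/sum of Stanley--Reisner ideals gives $I_\Delta = I_{\Delta_1} \cap I_{\Delta_2}$ and $I_{\Delta_1} + I_{\Delta_2} = I_{2^E} = (x_v : v \notin E)$, so there is a short exact sequence of $S$-modules
\[
0 \longrightarrow S/I_\Delta \longrightarrow S/I_{\Delta_1} \oplus S/I_{\Delta_2} \longrightarrow R \longrightarrow 0, \qquad R := S/(x_v : v \notin E).
\]
The module $R$ is a polynomial ring in the $\ell$ variables indexed by $E$, hence resolved by a Koszul complex: $\Tor_i^S(R,k)$ has dimension $\binom{n-\ell}{i}$ and is concentrated in internal degree $i$.

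I would then extract $b_c(S/I_\Delta)$ from the induced long exact sequence in $\Tor^S(-,k)$. Since $\Delta_1$ and $\Delta_2$ are Cohen--Macaulay of dimension $d-1$, each $S/I_{\Delta_j}$ is Cohen--Macaulay of dimension $d$ over $S$, so $\pd_S(S/I_{\Delta_j}) = n - d = c$ by Auslander--Buchsbaum; in particular $\Tor_{c+1}^S(S/I_{\Delta_1} \oplus S/I_{\Delta_2}, k) = 0$. The long exact sequence therefore yields
\[
0 \longrightarrow \Tor_{c+1}^S(R,k) \longrightarrow \Tor_c^S(S/I_\Delta, k) \longrightarrow \ker\varphi \longrightarrow 0,
\]
where $\varphi \colon \Tor_c^S(S/I_{\Delta_1},k) \oplus \Tor_c^S(S/I_{\Delta_2},k) \to \Tor_c^S(R,k)$ is the induced map. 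Taking dimensions, $b_c(S/I_\Delta) = \binom{n-\ell}{c+1} + \dim_k \ker\varphi$, so the task reduces to showing $\dim_k \ker\varphi = \type(\Delta_1) + \type(\Delta_2)$.

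To compute $\dim_k \ker\varphi$ I would split each $\Tor_c^S(S/I_{\Delta_j},k)$ by internal degree. As $\Tor_c^S(R,k)$ is concentrated in internal degree $c$, the map $\varphi$ annihilates the internal-degree-$>c$ part of its source, of dimension $\sum_{p>c} b_{c,p}(S/I_{\Delta_j})$ (Betti numbers taken over $S$). On the internal-degree-$c$ part, $\varphi$ is injective: the long exact sequence in internal degree $c$ shows that the kernel of $\varphi$ there is a quotient of $\Tor_c^S(S/I_\Delta,k)_c$, and $b_{c,c}(S/I_\Delta) = 0$ by Corollary \ref{basic}(1), because $\Delta$ uses all $n$ of its vertices, so $\tilde{H}_{-1}(\Delta|_W) = 0$ for every nonempty $W$, with $|W| = c \ge 1$ under the standing assumption that $\Delta$ is not a simplex. (If $\Delta$ is a simplex then $n_1 = n_2 = d$, $\ell = d$, both $\Delta_j$ are $(d-1)$-simplices, and the asserted formula reads $0 = 0 + 0 + \binom{0}{1}$.) Hence $\dim_k\ker\varphi = \sum_{j=1,2}\sum_{p>c} b_{c,p}(S/I_{\Delta_j})$.

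The last step, which I expect to be the real obstacle, is to check that $\sum_{p>c}b_{c,p}(S/I_{\Delta_j}) = \type(\Delta_j)$; this is exactly where the ghost vertices (equivalently, the possibility that some $\Delta_j$ is a simplex) intervene, and it is what makes this a strengthening of \cite[Lemma 3.5]{murai}. Write $\sum_{p>c}b_{c,p}(S/I_{\Delta_j}) = \dim_k\Tor_c^S(S/I_{\Delta_j},k) - b_{c,c}(S/I_{\Delta_j})$, and note by Hochster's formula that $b_{c,c}(S/I_{\Delta_j}) = \binom{n-n_j}{c}$, the number of $c$-subsets of $V$ avoiding every genuine vertex of $\Delta_j$. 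If $\Delta_j$ is not a simplex, then $\codim\Delta_j \ge 1$, so $n - n_j = c - \codim\Delta_j < c$, this term vanishes, and $\dim_k\Tor_c^S(S/I_{\Delta_j},k) = \type(\Delta_j)$ by Observation \ref{obsCM}. If $\Delta_j$ is a simplex, then $n_j = d$ and $S/I_{\Delta_j}$ is a polynomial ring on $d$ of the $n$ variables, so $\dim_k\Tor_c^S(S/I_{\Delta_j},k) = 1 = \binom{n-n_j}{c}$ and the difference is $0 = \type(\Delta_j)$. Either way the identity holds, and assembling the pieces gives $\type(\Delta) = b_c(S/I_\Delta) = \type(\Delta_1) + \type(\Delta_2) + \binom{n-\ell}{c+1}$, using Corollary \ref{basic}(1) for the first equality. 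Apart from that bookkeeping, the argument is a mechanical march through the long exact sequence, once one observes that $R$ has a linear (Koszul) resolution.
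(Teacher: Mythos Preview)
Your argument is correct and is essentially the paper's own proof: both use the Mayer--Vietoris short exact sequence $0\to k[\Sigma]\to k[\Delta_1]\oplus k[\Delta_2]\to k[\Gamma]\to 0$, the vanishing of $\Tor^S_{c+1}$ on the middle term by Cohen--Macaulayness, the Koszul resolution of $k[\Gamma]$, and the degree-$c$ vanishing $b_{c,c}(S/I_\Sigma)=0$ to extract the formula. Your packaging via $\ker\varphi$ and your explicit bookkeeping with ghost vertices (handling the case that some $\Delta_j$ is a simplex) make the ``modified type'' subtraction more transparent than in the paper, but the underlying mechanism is identical.
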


\begin{proof}
Consider the short exact sequence 
\[
0 \to k[\Delta_1 \cup \Delta_2] \rightarrow k[\Delta_1] \oplus k[\Delta_2] \rightarrow k[\Delta_1 \cap \Delta_2] \to 0. 
\]
Let $\Sigma = \Delta_1 \cup \Delta_2$, and let $\Gamma = \Delta_1 \cap \Delta'_2$ (note that $\Gamma$ is just the simplex resulting from identifying $E_1$ and $E_2$). Taking $\Tor^S(-, k)$ of this sequence yields:
\[
\cdots \rightarrow \Tor^S_{i+1}(k[\Gamma]) \rightarrow \Tor_i^S(k[\Sigma]) \rightarrow \Tor^S_i(k[\Delta_1]) \oplus \Tor^S_i(k[\Delta_2]) \rightarrow 
\Tor^S_i(k[E]) \rightarrow \cdots
\]
As  $\Tor^S_i(k[\Delta_1]) \oplus \Tor^S_i(k[\Delta_2]) $ is zero for all $i>c$, we have an exact sequence:
\[
 0\rightarrow \Tor^S_{c+1}(k[\Gamma]) \rightarrow \Tor^S_{c}(k[\Sigma_1]) \rightarrow \Tor^S_{c}(k[\Delta_1]) \oplus \Tor^S_{c}(k[\Delta_2]).
\]

Now we argue in as in \cite{murai}. $\Tor^S_i(k[\Gamma]) $ only exists in degree $i$, as $\Gamma$ is a simplex. On the other hand, by  Hochster's formula, for any complex $\Delta_1$ with more than one facet, $\Tor_c^S(k[\Delta_1])_c =0$ (where $c$ is the codimension). So the above is a short exact sequence (that is, the last map is surjective) in every degree $j\neq c$, and in degree $c$ it becomes:
\[
0 \to 0 \to 0 \to \Tor^S_{c}(k[\Delta_1])_c \oplus \Tor^S_{c}(k[\Delta_2])_c
\] 
Which implies that $\dim \Tor^S_{c}(k[\Sigma])$ equals
\[
\dim \left[\Tor^S_{c+1}(k[\Gamma])\right] + \dim \left[\Tor^S_{c}(k[\Delta_1]) \oplus \Tor^S_{c}(k[\Delta_2])\right] - \dim \left[\Tor^S_{c}(k[\Delta_1])_c \oplus \Tor^S_{c}(k[\Delta_2])_c\right].
\]

Since $\Gamma$ is just the simplex $E_1 = E_2$, note that $k[\Delta_1 \cap \Delta_2] = S/( x_i : x_i \notin \Gamma )$.  This quotient is resolved by the Koszul resolution, meaning that the dimension of $\Tor^S_{c+1}$ is $\binom{n-\ell}{c+1}$. Finally, note that as $\Delta_1$ is  Cohen-Macaulay, 
$$\dim \left[\Tor^S_{c}(k[\Delta_1])\right] -  \dim \left[\Tor^S_{c}(k[\Delta_1])_c\right]=\type(\Delta_1),$$
and the same equality holds for $\Delta_2$ (see \ref{obsCM}).
\end{proof}

\begin{corollary}\label{secondglue}
Let  $\Delta_1$ and $\Delta_2$ be Cohen-Macaulay complexes of dimension $d-1$.  Let $E_1$ and $E_2$ be $(\ell-1)$ dimensional faces of $\Delta_1$ and $\Delta_2$, respectively.  Then 
\[
\td(\Delta \sqcup_{E_1 = E_2} \Delta') = \td(\Delta) + \td(\Delta') - d + \ell + \binom{n-\ell}{c+1}.
\]
In particular, if $\ell\geq d-1$, then
$$\td(\Delta \sqcup_{E_1=E_2} \Delta') = \td(\Delta) + \td(\Delta').$$

\end{corollary}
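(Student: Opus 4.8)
The plan is to obtain this as an immediate consequence of Theorem~\ref{mainglue}, translating the identity for modified types into one for type defects by means of the defining relation $\type(\cdot) = \td(\cdot) + \codim(\cdot)$.

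First I would fix the bookkeeping. Let $n_1$ and $n_2$ denote the numbers of vertices of $\Delta_1$ and $\Delta_2$; since the $\ell$ vertices of $E_1$ get identified with those of $E_2$, the glued complex $\Gamma := \Delta_1 \sqcup_{E_1 = E_2} \Delta_2$ has $n = n_1 + n_2 - \ell$ vertices. All three complexes are $(d-1)$-dimensional, so their codimensions are $c_1 = n_1 - d$, $c_2 = n_2 - d$, and $c = n - d$, and a one-line computation gives the identity $c_1 + c_2 - c = \ell - d$. Now I would substitute $\type = \td + \codim$ into the conclusion of Theorem~\ref{mainglue}, obtaining
\[
\td(\Gamma) + c = \bigl(\td(\Delta_1) + c_1\bigr) + \bigl(\td(\Delta_2) + c_2\bigr) + \binom{n-\ell}{c+1},
\]
and solving for $\td(\Gamma)$ while invoking $c_1 + c_2 - c = \ell - d$ yields exactly the claimed formula.

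For the ``in particular'' assertion, note that $E_1$ is a face of the $(d-1)$-dimensional complex $\Delta_1$, hence has at most $d$ vertices, so $\ell \le d$; combined with the hypothesis $\ell \ge d - 1$ this forces $\ell \in \{d-1, d\}$. When $\ell = d$ one has $-d + \ell = 0$ and $\binom{n-\ell}{c+1} = \binom{c}{c+1} = 0$; when $\ell = d-1$ one has $-d + \ell = -1$ and $\binom{n-\ell}{c+1} = \binom{c+1}{c+1} = 1$. In either case the correction terms cancel, leaving $\td(\Gamma) = \td(\Delta_1) + \td(\Delta_2)$. I do not expect a genuine obstacle here: the whole argument is a routine deduction from Theorem~\ref{mainglue}, and the only steps needing care are the vertex/codimension count and the evaluation of the two boundary binomial coefficients.
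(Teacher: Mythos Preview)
Your proof is correct and follows the same approach as the paper: both derive the formula directly from Theorem~\ref{mainglue} via the codimension bookkeeping $c_1 + c_2 - c = \ell - d$. Your treatment of the ``in particular'' clause is in fact more explicit than the paper's, which leaves the binomial-coefficient evaluation to the reader.
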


\begin{proof}
The assertions  follow from  Theorem \ref{mainglue}. One just needs to take care of the codimensions in the definition of type defect: if  $\Delta$ has $p$ vertices and $\Delta'$ has $m$ vertices, then $n = p + m - \ell$, so the codimensions are $p-d, m-d, p+m-\ell-d$ respectively.  

\end{proof}

\section{Chordal graphs and treeish complexes}\label{graphs}

Throughout this section, let $\Delta = G$ be a simple graph on $n$ vertices, viewed as a $1$-dimensional simplicial complex.  In this case we can simplify Equation \ref{main} in Remark \ref{typedefn} as follows.

\[
\type(G) =  \dim(\tilde{H_1}(G))  +  \sum_{|W| = n -1} \dim(\tilde{H}_0(G_W)) . 
\]

Note that the term $\dim(\tilde{H_1}(G))$ is simply the number of basic cycles of $G$.  Assume $G$ is connected.  Let $e$ be the number of edges of $G$.  Then any spanning tree of $G$ has $n-1$ edges, and any edge not in a given spanning tree corresponds to a basic cycle of $G$.  Thus, $\dim(\tilde{H}_1(G)) = e - n + 1$.  For any graph $H$, let $C(H)$ be the number of connected components of $H$.  Also, if $W$ is a subset of vertices of $G$ with $|W| = n-1$, let $W = V(G) \setminus \{v\}$.  Then $\dim(\tilde{H}_0(G - v)) = C(G - v) - 1$.  Thus, the above expression becomes:

\begin{align}\label{graphtype}
\type(G) = e - n + 1 + \sum_{v \in V(G)} (C(G - v) - 1),
\end{align}

provided $G$ is connected.

More generally, for any simple graph $G$, the dimension of $\tilde{H}_1(G)$ is given by $e - n + C(G)$, and so 
\begin{align}\label{gengraphtype}
\type(G) = e - n + C(G) + \sum_{v \in V(G)} (C(G - v) - 1),
\end{align}

\begin{observation}\label{2conn}
If $G$ is \emph{$2$-connected}, meaning the removal of any one vertex cannot disconnect the graph, we have $\type(G) = e - n +1$.  Note that this can also be seen via Observation \ref{2cmobs}.  Thus, If $G$ is a $2$-connected graph with $e$ edges and $n$ vertices, then $\td(G) = (e - n + 1) + 1 - n + 1 = e - 2n +3$.   
\end{observation}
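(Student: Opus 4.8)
The plan is to derive both equalities directly from Equation \ref{gengraphtype}, which expresses $\type(G) = e - n + C(G) + \sum_{v \in V(G)}(C(G-v)-1)$ for an arbitrary simple graph $G$. First I would note that a $2$-connected graph is in particular connected, so $C(G) = 1$ and the first two terms collapse to $e - n + 1$. Next I would invoke the defining property of $2$-connectedness: deleting any single vertex $v$ leaves $G - v$ connected, so $C(G - v) = 1$ and every summand $C(G - v) - 1$ vanishes. This immediately yields $\type(G) = e - n + 1$.

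For the type defect, I would recall that $G$, viewed as a $1$-dimensional complex, has $d = 2$, hence codimension $c = n - d = n - 2$. Substituting into $\td(G) = \type(G) - c$ gives $\td(G) = (e - n + 1) - (n - 2) = e - 2n + 3$, as claimed.

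As an alternative route (and a consistency check), I would observe that a $2$-connected graph on $n \geq 3$ vertices is $2$-Cohen-Macaulay: removing a vertex leaves a connected graph on $n - 1 \geq 2$ vertices, which still carries at least one edge and so remains a $1$-dimensional Cohen-Macaulay complex. Observation \ref{2cmobs} then gives $\type(G) = \dim \tilde H_{1}(G)$, and since $G$ is connected its reduced first Betti number (the cycle rank $e - n + C(G)$) equals $e - n + 1$, recovering the same formula.

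I do not expect a genuine obstacle here, as the argument is essentially bookkeeping; the only point needing a moment of care is the $2$-Cohen-Macaulay claim in the alternative argument, where one must check that $G - v$ stays one-dimensional rather than degenerating to a discrete vertex set. That subtlety affects only the smallest cases, and the direct computation via Equation \ref{gengraphtype} sidesteps it entirely.
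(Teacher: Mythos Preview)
Your argument is correct and is precisely the paper's approach: the observation is stated immediately after Equation~\ref{gengraphtype}, and the intended justification is exactly the substitution $C(G)=1$ and $C(G-v)=1$ that you carry out, together with the codimension computation $c=n-2$. The paper likewise points to Observation~\ref{2cmobs} as an alternative, which matches your consistency check.
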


\begin{proposition}\label{treetd}
If $T$ is a tree, then $\td(T) = 0$.  
\end{proposition}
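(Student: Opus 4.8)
The plan is to apply Equation \ref{graphtype} directly to the tree $T$. Since $T$ is connected, the formula reads $\type(T) = e - n + 1 + \sum_{v \in V(T)} (C(T-v) - 1)$, and since a tree on $n$ vertices has $e = n-1$ edges, the first three terms vanish and we are left with $\type(T) = \sum_{v \in V(T)} (C(T-v) - 1)$. So the entire computation reduces to understanding how many components $T - v$ has for each vertex $v$.

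The key combinatorial fact I would use is that in a tree, deleting a vertex $v$ produces exactly $\deg(v)$ connected components: the tree decomposes into the subtrees hanging off each edge incident to $v$, and since $T$ has no cycles, distinct incident edges lead to distinct components. Hence $C(T-v) - 1 = \deg(v) - 1$. Summing over all vertices and using the handshake lemma, $\sum_{v \in V(T)} \deg(v) = 2e = 2(n-1)$, so $\sum_{v}(C(T-v)-1) = 2(n-1) - n = n - 2$. Therefore $\type(T) = n-2$.

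Finally, I compute the codimension. Viewed as a $1$-dimensional complex, $T$ has $d = 2$, so $c = \codim(T) = n - d = n - 2$. Thus $\td(T) = \type(T) - c = (n-2) - (n-2) = 0$, as claimed.

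I do not anticipate a serious obstacle here; the only point requiring a little care is the justification that $C(T-v) = \deg(v)$ in a tree (one should note the edge case $n=1$, where $T$ is a single vertex, a simplex, and $\td(T) = 0$ by Corollary \ref{basic}(2), and also that $T$ connected guarantees Equation \ref{graphtype} applies). Everything else is a routine application of the handshake lemma and the definition of type defect.
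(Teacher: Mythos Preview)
Your proof is correct and follows essentially the same argument as the paper: both apply Equation~\ref{graphtype}, use that $C(T-v)=\deg(v)$ in a tree, invoke the handshake lemma to compute $\type(T)=n-2$, and compare with $\codim(T)=n-2$. Your version is slightly more explicit about why the cycle term vanishes and about the degenerate $n=1$ case, but the substance is identical.
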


\begin{proof}
Let $v \in T$, and suppose $T$ has $n$ vertices and $e$ edges.  Then the number of connected components of $G - v$ is $\deg(v)$.  Since $T$ has no cycles, 
\[
\type(T) = \sum_{v \in V(T)} (\deg(v) - 1) = 2e - n = 2(n-1) - n = n -2.  
\]
As $\codim(T) = n - 1 - 1 = n-2$, we get $\td(T) = 0$. 
\end{proof}

\noindent
\emph{Chordality} is one of the most widely-studied graph properties.  It turns out that chordality is equivalent to a requirement on the type defect, as shown by the next theorem. 

\begin{theorem}\label{chordal}
$G$ is chordal iff $\td(G|_W) \geq 0$ for any $W \subseteq V$.  
\end{theorem}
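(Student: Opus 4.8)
The plan is to prove both implications by induction on $|V|$, using the formula \eqref{graphtype} for $\type$ of a connected graph together with standard structural facts about chordal graphs. For the forward direction, suppose $G$ is chordal. Since chordality is inherited by induced subgraphs, it suffices to show $\td(G)\geq 0$ for every chordal $G$. If $G$ is disconnected, I would reduce to the connected case: removing a vertex from one component only affects $C$ within that component, so $\type(G)$ decomposes in a controlled way over components (and the codimension behaves well too), letting me sum the inequalities. If $G$ is connected, invoke Dirac's theorem: a chordal graph has a simplicial vertex $v$, i.e.\ one whose neighborhood $N(v)$ is a clique. Deleting $v$ gives a chordal graph $G-v$ on $n-1$ vertices, to which induction applies: $\type(G-v)\geq \codim(G-v) = n-3$. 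Now I compare $\type(G)$ with $\type(G-v)$ using \eqref{graphtype}. The edge count changes by $\deg(v)$, the $n$ shifts by one, and the key point is that because $N(v)$ is a clique, removing $v$ does not disconnect $G$ and the terms $C(G-w)$ for $w\neq v$ change in a way I can bound below; meanwhile the new summand $C(G-v)-1$ is nonnegative. Carefully accounting gives $\type(G)\geq \type(G-v)+1 \geq n-2 = \codim(G)$, i.e.\ $\td(G)\geq 0$.

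For the converse, I argue contrapositively: if $G$ is not chordal, exhibit an induced subgraph $W$ with $\td(G|_W)<0$. A non-chordal graph contains an induced cycle $C_m$ with $m\geq 4$, so take $W$ to be the vertex set of such a cycle. Then $G|_W = C_m$, which is $2$-connected, so by Observation \ref{2conn} we get $\td(C_m) = e - 2n + 3 = m - 2m + 3 = 3 - m \leq -1 < 0$. This immediately contradicts the hypothesis $\td(G|_W)\geq 0$ for all $W$, completing the proof.

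The main obstacle is the bookkeeping in the forward direction: making precise how the component-count terms $\sum_w (C(G-w)-1)$ change when a simplicial vertex is deleted. The cleanest route is probably to observe that for $w\neq v$, since $v$ is simplicial, $v$ together with $N(v)\setminus\{w\}$ still lies in a common clique of $G-w$, so no component of $G-w$ is split off by further deleting $v$; hence $C\big((G-v)-w\big)\leq C(G-w)$, which is exactly the inequality needed to push the induction through, with the slack provided by the edge-count increase $\deg(v)\geq 1$ (handling the isolated-vertex case $\deg(v)=0$ separately, where $G$ is disconnected unless $G$ is a point). If this direct comparison proves awkward, an alternative is to use the known characterization that chordal graphs are precisely those for which all induced subgraphs have Stanley--Reisner ideal with linear resolution (Fröberg's theorem) combined with the linear-resolution bound $\td\geq 0$ from Theorem \ref{intro2}; but I would prefer the self-contained simplicial-vertex argument since it keeps the section elementary.
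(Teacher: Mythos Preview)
Your proposal is correct and follows essentially the same approach as the paper: for the forward direction, induct via a simplicial vertex $v$ (Dirac) and show $\type(G)\geq\type(G-v)+1$, and for the converse, exhibit an induced cycle $C_m$ with $m\geq 4$ and compute $\td(C_m)=3-m<0$ using Observation~\ref{2conn}. Your bookkeeping via the inequality $C((G-v)-w)\leq C(G-w)$ (which holds because $N(v)\setminus\{w\}$ remains a clique) is in fact slightly more explicit than the paper's, which simply splits into the cases $\deg(v)=1$ and $\deg(v)\geq 2$ and points to the term that increases in each case.
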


Our proofs use the following classical result of Dirac: 

\begin{theorem}\cite{dirac}\label{dirac}
Let $G$ be a chordal graph.  Then there exists a vertex $v$ of $G$ such that $G|_{N(v)}$ is complete, where we write $N(v)$ to mean the set of neighbors of $v$.  Such a vertex is called \emph{simplicial}.  
\end{theorem}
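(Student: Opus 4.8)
The plan is to prove the slightly stronger statement that every chordal graph is either complete or has two non-adjacent simplicial vertices, and then proceed by induction on the number of vertices. The base cases (one or two vertices, or $G$ complete) are immediate, since then every vertex is simplicial and any two are either absent or present but the graph is complete.

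For the inductive step, assume $G$ is not complete, so there exist non-adjacent vertices $a,b$. I would choose a minimal $a$--$b$ vertex separator $S$ and let $A$ (respectively $B$) be the vertex set of the connected component of $G \setminus S$ containing $a$ (respectively $b$), folding any remaining components of $G \setminus S$ into $A$. The first key step is to show that $S$ induces a clique in $G$. Suppose $s_1,s_2\in S$ are non-adjacent; by minimality of $S$, each of $s_1,s_2$ has a neighbor in $A$ and a neighbor in $B$, and since $G|_A$ and $G|_B$ are connected we may pick a shortest (hence induced) $s_1$--$s_2$ path $P_A$ with all interior vertices in $A$, and likewise an induced path $P_B$ through $B$. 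Their union is a cycle of length at least $4$, so chordality forces a chord; since there are no edges between $A$ and $B$ and neither $P_A$ nor $P_B$ admits an internal chord, the only possible chord is the edge $s_1s_2$, a contradiction.

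Next I would apply the induction hypothesis to the chordal graphs $G|_{A\cup S}$ and $G|_{B\cup S}$, each of which has strictly fewer vertices than $G$ because $B$ (respectively $A$) is nonempty. If $G|_{A\cup S}$ is complete, then every vertex of $A$ is simplicial in it; otherwise it has two non-adjacent simplicial vertices, and since $S$ is a clique at least one of them lies in $A$. In either case we obtain a vertex $v\in A$ that is simplicial in $G|_{A\cup S}$. The crucial observation is that $N_G(v)=N_{G|_{A\cup S}}(v)$, because $A$ is a component of $G\setminus S$, so every $G$-neighbor of $v$ lies in $A\cup S$; hence $v$ is simplicial in $G$. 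Symmetrically we get a vertex $w\in B$ simplicial in $G$, and since $v$ and $w$ lie in different components of $G\setminus S$ they are non-adjacent, completing the induction.

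The main obstacle is the clique lemma for minimal separators; the rest is bookkeeping. One must take care to choose the auxiliary paths $P_A,P_B$ to be induced, so that the only chord available on the combined cycle is exactly the edge $s_1s_2$ one wants to produce, and one must verify that passing to $G|_{A\cup S}$ genuinely decreases the vertex count — which is precisely where the non-adjacency of $a$ and $b$, guaranteeing $B\neq\emptyset$, is used.
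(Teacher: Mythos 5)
The paper does not prove this statement at all: it is quoted as a classical result with a citation to Dirac's paper, and the surrounding arguments (Theorems \ref{chordal} and \ref{tfae}) only use it as a black box. So there is nothing in the paper to match your argument against; what you have written is the standard self-contained proof (essentially Dirac's original one): prove by induction the stronger claim that a non-complete chordal graph has two non-adjacent simplicial vertices, using the key lemma that a minimal $a$--$b$ separator $S$ in a chordal graph induces a clique, and then pull a simplicial vertex out of each side $G|_{A\cup S}$, $G|_{B\cup S}$, noting that a vertex of $A$ has all its $G$-neighbors in $A\cup S$. This is correct and complete in its essentials, and it is exactly the right level of generality for what the paper needs.

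One small internal inconsistency to repair: you define $A$ by folding all components of $G\setminus S$ other than the one containing $b$ into the component of $a$, but then in the separator-clique lemma you invoke the connectedness of $G|_A$ and the fact that each $s_i$ has a neighbor ``in $A$.'' With the folded definition $G|_A$ need not be connected, and a neighbor in $A$ is too weak to route the path. The fix is immediate: minimality of $S$ gives each $s\in S$ a neighbor in the actual component $C_a$ containing $a$ (and in $C_b$), and $C_a$, $C_b$ are connected, so run the two induced paths through $C_a$ and $C_b$; the folding is harmless for the inductive step, since $A$ is still a union of components of $G\setminus S$, which is all that the claim $N_G(v)\subseteq A\cup S$ requires.
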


\begin{proof}[Proof of Theorem \ref{chordal}]
First assume that $G$ is a chordal graph, let $v$ be a simplicial vertex of $G$, and let $N(v)$ be its neighbors.  Then $\td(G - v) \geq 0$, by induction, the base case being immediate.  Adding $v$ to $G\setminus v$ increases the codimension by $1$, so we need to show that the type increases by at least $1$.  

If $v$ has only one neighbor $w$, then the number of components of $G - w$ is one greater than the number of components of $(G - v) - w$.  Thus $\type(G) = \type(G - v) + 1$.  

If $v$ has at least two neighbors, then $\dim \tilde{H}_1(G) \geq \dim \tilde{H}_1(G - v) + 1$.  

For the converse, assume $G$ is not chordal.  Then there exists a set $W \subseteq V$ such that $G|_W$ is a cycle and $|W| \geq 4$.  Using Observation \ref{2conn}, $\td(G|_W) = 1 - (|W| -2) < 0$. 
\end{proof}

\begin{corollary}
$G$ is chordal iff $\td(G|_W) \geq 0$ for any $W \subseteq V$ such that $G|_W$ is connected.  
\end{corollary}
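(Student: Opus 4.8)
The plan is to deduce this from Theorem~\ref{chordal}, which already gives the equivalence without the connectedness restriction. The forward direction is immediate: if $G$ is chordal, then $\td(G|_W)\geq 0$ for \emph{every} $W\subseteq V$ by Theorem~\ref{chordal}, in particular for those $W$ with $G|_W$ connected. So the only content is the converse: assuming $\td(G|_W)\geq 0$ for all $W$ inducing a connected subgraph, I must show $G$ is chordal, or equivalently (invoking Theorem~\ref{chordal} again) that $\td(G|_W)\geq 0$ for all $W$, connected or not.

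The natural approach is to reduce an arbitrary induced subgraph to its connected components. First I would recall from Equation~\ref{gengraphtype} how $\type$ behaves on a disjoint union: if $G|_W$ has connected components $G_1,\dots,G_r$ on vertex sets $W_1,\dots,W_r$, one can compute $\type(G|_W)$ directly from the formula $\type(H)=e(H)-n(H)+C(H)+\sum_{v}(C(H-v)-1)$. Each component contributes its own edge count, vertex count, and — crucially — for a vertex $v\in W_i$ the quantity $C(G|_W - v)$ equals $C(G_i-v)+(r-1)$, so the summand $C(G|_W-v)-1$ exceeds the corresponding summand $C(G_i-v)-1$ for $G_i$ alone by exactly $r-1$. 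Summing over all $n(W)=\sum n(W_i)$ vertices and combining with $C(G|_W)=r=\sum C(G_i)-(r-1)\cdot(\text{something})$ bookkeeping, I expect to get a clean identity of the shape
\[
\type(G|_W)=\sum_{i=1}^r \type(G_i) + (r-1)\,n(W).
\]
Then, since $\codim(G|_W)=n(W)-2$ while $\codim(G_i)=n(W_i)-2$ and $\sum(n(W_i)-2)=n(W)-2r$, subtracting codimensions gives
\[
\td(G|_W)=\sum_{i=1}^r \td(G_i) + (r-1)\,n(W) + (2r - 2) \;\geq\; \sum_{i=1}^r \td(G_i),
\]
because $r\geq 1$ makes the extra terms nonnegative. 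Each $G_i=G|_{W_i}$ is connected, so by hypothesis $\td(G_i)\geq 0$, whence $\td(G|_W)\geq 0$. This establishes the hypothesis of Theorem~\ref{chordal} for all $W$, so $G$ is chordal.

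The main obstacle is getting the additivity/monotonicity identity for $\td$ under disjoint union exactly right — tracking how the $C(G|_W - v)$ terms split across components and how the codimension shift interacts with taking the union. It is a routine but error-prone computation with Equation~\ref{gengraphtype}; the key observation that makes it work is simply that adding disconnected pieces can only increase $\dim\tilde H_0$ of every vertex-deletion, hence can only increase the type relative to the sum of the pieces, while the codimension behaves additively up to a correction that is also in the favorable direction. Once that inequality $\td(G|_W)\geq\sum_i\td(G|_{W_i})$ is in hand, the corollary falls out of Theorem~\ref{chordal} with no further work.
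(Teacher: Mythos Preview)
Your approach is essentially the same as the paper's: show that the type defect of a graph is at least the sum of the type defects of its connected components, and then invoke Theorem~\ref{chordal}. The paper simply asserts this inequality as ``straightforward,'' while you carry out the computation with Equation~\ref{gengraphtype}.

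One correction: your displayed identity has a sign slip. With $\type(G|_W)=\sum_i\type(G_i)+(r-1)n(W)$ (which is correct) and $\codim(G|_W)-\sum_i\codim(G_i)=(n(W)-2)-(n(W)-2r)=2r-2$, subtracting gives
\[
\td(G|_W)=\sum_{i=1}^r\td(G_i)+(r-1)\,n(W)-(2r-2)=\sum_{i=1}^r\td(G_i)+(r-1)\bigl(n(W)-2\bigr),
\]
not $+(2r-2)$. The correction does not harm the argument, since $(r-1)(n(W)-2)\geq 0$ as soon as $r\geq 1$ and $n(W)\geq 2$, so the desired inequality $\td(G|_W)\geq\sum_i\td(G_i)$ still follows. (You should also note that components consisting of a single vertex have $\td=0$ and do not satisfy $\codim=n_i-2$, but they are harmless for the inequality.)
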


\begin{proof}
It is straightforward to see that the type defect of a graph is greater than or equal to the sum of the type defects of its connected components.  Thus $\td(G|_W) \geq 0$ for all induced subgraphs $G|_W$ of $G$ if and only if this holds for all connected subgraphs $G|_W$, and the result follows from Theorem \ref{chordal}.
\end{proof}

Now we turn our attention to graphs for which every connected induced subgraph has type defect zero.  By Theorem \ref{chordal}, such graphs must necessarily be chordal.  However, note that many chordal graphs have positive type defect (for example, $K_n$ for $n > 3$).  

\begin{definition} \label{treeishdefn}
Call a graph \emph{treeish} if it can be constructed recursively via the following rules.  
\begin{enumerate}
\item An edge or a triangle is treeish. 
\item If $G$ is treeish and $v$ is a vertex of $G$, adding a new vertex $w$ and a new edge $\{v, w\}$ results in a treeish graph. 
\item If $G$ is treeish, and $\{v, w\}$ is an edge of $G$, then the graph obtained from $G$ by adding a new vertex $z$ and the edges $\{v, z\}$ and $\{w, z\}$ is treeish.  
\end{enumerate}
\end{definition}

Clearly any tree is treeish, as it can be constructed from a single vertex by repeated application of rule (2).  Also, any treeish graph is easily seen to be chordal.

\begin{figure}[htp]
\centering
\includegraphics[height = 1.6in]{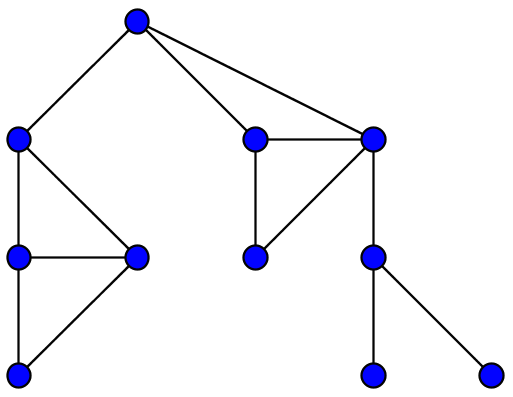}
\caption{A treeish graph.}
\end{figure}

\begin{theorem}\label{tfae}
The following are equivalent. 
\begin{enumerate}
\item $G$ is treeish.
\item $\td(G|_W) = 0$ for any $W \subseteq V$ such that $G|_W$ is connected.
\item $G$ is chordal, and the number of triangles of $G$ is the dimension of $\tilde{H}_1(G)$. 
\end{enumerate}
\end{theorem}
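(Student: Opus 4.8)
The plan is to prove the cyclic chain of implications $(1)\Rightarrow(2)\Rightarrow(3)\Rightarrow(1)$, using the structural recursion in Definition \ref{treeishdefn} for the first arrow, the gluing and chordality results for the middle one, and an induction on the number of vertices (peeling off a simplicial vertex) for the last.

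\medskip

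\textbf{$(1)\Rightarrow(2)$.} I would argue by induction on the construction of $G$. The base cases (edge, triangle) have type defect zero: an edge is a tree, so Proposition \ref{treetd} applies (alternatively it is a $1$-simplex with $\type=0$, $c=0$), and a triangle is $2$-connected with $e=n=3$, so Observation \ref{2conn} gives $\td=e-2n+3=0$. For the inductive step, suppose $G$ is obtained from a treeish graph $G'$ by rule (2) or (3). Rule (2) adds a leaf, which is exactly gluing an edge (a Cohen--Macaulay $1$-complex with $\td=0$) along a single vertex, i.e. along an $(\ell-1)$-face with $\ell=1=d-1$; by Corollary \ref{secondglue} (the case $\ell\ge d-1$), $\td(G)=\td(G')+\td(\text{edge})=\td(G')$. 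Rule (3) adds a new vertex $z$ joined to an existing edge $\{v,w\}$, i.e.\ glues a triangle ($\td=0$) to $G'$ along the edge $\{v,w\}$, again an $(\ell-1)$-face with $\ell=2=d$; so Corollary \ref{secondglue} gives $\td(G)=\td(G')$. (One must note that $G'$ is Cohen--Macaulay in order to invoke Corollary \ref{secondglue}; a connected chordal graph is Cohen--Macaulay since a connected graph is Cohen--Macaulay iff it is connected in codimension one, which chordality guarantees — or invoke Theorem \ref{typefact} after checking $\td(G')\le 0$, which the induction already gives as $\td(G')=0$.) Now, since adding such a vertex also preserves connectivity and every connected induced subgraph of a treeish graph is itself treeish (an easy check: restricting the recursive construction to $W$ and deleting non-faces still follows rules (1)--(3)), every connected $G|_W$ is treeish and hence has $\td=0$ by the same induction. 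This also shows one direction of $(2)\Leftrightarrow(3)$ is not yet needed.

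\medskip

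\textbf{$(2)\Rightarrow(3)$.} Chordality is immediate from Theorem \ref{chordal} (since $\td(G|_W)=0\ge 0$ for connected $W$, and the type defect of a graph dominates the sum over its components, so $\td(G|_W)\ge 0$ for all $W$). For the triangle count, take any connected induced subgraph; I would use Equation \ref{graphtype}, $\type(G|_W)=e_W-|W|+1+\sum_{v\in W}(C(G|_W-v)-1)$, together with $\td(G|_W)=0$, i.e. $\type(G|_W)=\codim(G|_W)=|W|-2$. The cleanest route is to apply this to $G$ itself and every connected induced subgraph, and extract that $G$ has no induced $4^+$-cycle (chordality) and that every ``extra'' edge beyond a spanning tree closes off exactly a triangle, which forces $\dim\tilde H_1(G)$ (the number of independent cycles $=e-n+C(G)$) to equal the number of triangles. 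Concretely: in a connected chordal graph, $\dim\tilde H_1(G)$ is always at least the number of triangles minus the number of independent relations among triangle boundaries; treeish-ness (equivalently $\td=0$ on all connected induced subgraphs) rules out configurations like $K_4$ where triangles outnumber the first Betti number, and one shows by induction on a simplicial-vertex ordering that equality forces each new simplicial vertex to have degree $\le 2$.

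\medskip

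\textbf{$(3)\Rightarrow(1)$.} Induct on $n=|V(G)|$; the base cases $n\le 3$ are clear. Let $G$ be chordal with (number of triangles) $=\dim\tilde H_1(G)$; by Dirac's Theorem \ref{dirac} pick a simplicial vertex $v$, so $G|_{N(v)}$ is complete. The key claim, which is the \textbf{main obstacle}, is that $\deg(v)\le 2$: if $\deg(v)\ge 3$ then $N(v)\cup\{v\}$ contains $K_4$, which has four triangles but $\dim\tilde H_1=3$, and more care is needed to see this forces a strict inequality globally — one must check that deleting $v$ decreases the triangle count by exactly $\deg(v)$ choose... no: deleting a simplicial vertex of degree $k$ removes $\binom{k}{2}$ triangles (those through $v$, since $N(v)$ is a clique) but decreases $\dim\tilde H_1$ by $k-1$ (the link is a $(k-1)$-simplex's worth, contributing $k-1$ to the cycle space via the $\binom k2$ edges on $k$ vertices minus the $k$ edges to $v$... actually $\dim\tilde H_1$ drops by $\binom k2-(k-1)$ when we also delete the clique edges — so cutting just $v$: the $k$ edges $vu$ are removed, $C$ may change). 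The honest argument: $\dim\tilde H_1(G)=e-n+C(G)$ and $\dim\tilde H_1(G-v)=(e-k)-(n-1)+C(G-v)=\dim\tilde H_1(G)-k+1+(C(G-v)-C(G))$, and since $N(v)$ is a clique (hence connected), $C(G-v)=C(G)$, giving $\dim\tilde H_1(G-v)=\dim\tilde H_1(G)-(k-1)$; meanwhile the number of triangles of $G-v$ is that of $G$ minus $\binom k2$. For $G-v$ to still satisfy (3) (so that induction applies) we need $\binom k2=k-1$, forcing $k\le 2$. Thus $v$ is a leaf ($k=1$) or joined to an edge ($k=2$, with $N(v)$ a single edge since it is a complete graph on $2$ vertices); in either case $G$ is obtained from $G-v$ by rule (2) or rule (3), and $G-v$ satisfies (3) (chordal, and (triangles) $=\dim\tilde H_1$), hence is treeish by induction, so $G$ is treeish. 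The one subtlety to handle carefully is the case where $G-v$ is disconnected after removing a degree-$\le 2$ simplicial vertex only when $k\le 1$ and $v$'s neighbor is a cut vertex — but rules (2),(3) are stated for connected pieces, so I would first reduce to the connected case at the start (a chordal graph satisfying (3) has its triangle count equal to the sum over components, and likewise $\dim\tilde H_1$, so each component independently satisfies (3); and ``treeish'' should be taken to mean each component is treeish, or one observes a disconnected graph is never connected-induced-subgraph-of-itself in a way that matters). I expect pinning down this degree bound and the bookkeeping of triangles versus first Betti number under vertex deletion to be where the real work lies; everything else is a direct application of Corollary \ref{secondglue}, Observation \ref{2conn}, Theorem \ref{chordal}, and Equation \ref{graphtype}.
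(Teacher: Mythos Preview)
Your cycle of implications and the tools you invoke (Corollary \ref{secondglue} for gluing, Theorem \ref{chordal} for chordality, Dirac's theorem plus simplicial-vertex induction) match the paper's proof closely. Your $(1)\Rightarrow(2)$ is the same argument; your $(2)\Rightarrow(3)$ sketch is on the right track---the paper makes it precise by computing $\td(K_m)=\tfrac{m^2-5m+6}{2}>0$ for $m\ge 4$, which directly contradicts (2) and forces the simplicial vertex to have degree at most $2$, after which the bookkeeping for $k=1,2$ is immediate.

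There is one genuine logical slip in your $(3)\Rightarrow(1)$. Your counting is correct: removing a simplicial vertex $v$ of degree $k$ drops the triangle count by $\binom{k}{2}$ and $\dim\tilde H_1$ by $k-1$ (since $N(v)$ is a clique, $C(G-v)=C(G)$). But the sentence ``for $G-v$ to still satisfy (3) (so that induction applies) we need $\binom{k}{2}=k-1$'' is circular: you are not entitled to assume $G-v$ satisfies (3); that is precisely what must be established. What is missing is the one-line lemma that in \emph{any} chordal graph the triangle boundaries span $\tilde H_1$ (every cycle of length $\ge 4$ has a chord, hence decomposes into shorter cycles, hence inductively into triangles), so that (number of triangles) $\ge \dim\tilde H_1$ always holds. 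With this in hand, $k\ge 3$ forces $(\text{triangles of }G-v)<\dim\tilde H_1(G-v)$, contradicting chordality of $G-v$; then $k\le 2$ and your induction goes through. The paper's version of this step is slightly more direct: it stays in $G$ and observes that if $\{a,b,c\}\subseteq N(v)$ then the boundary of $abc$ equals the sum of the boundaries of $abv$, $acv$, $bcv$ in $\tilde H_1(G)$, so the triangle boundaries are dependent and (via the same spanning fact, which the paper also leaves implicit) $\dim\tilde H_1(G)<(\text{number of triangles})$, contradicting (3) for $G$ itself. Either route works once the spanning lemma is stated. Your disconnectedness worry at the end is a red herring: a simplicial vertex of degree $\ge 1$ has connected neighborhood, so its removal cannot disconnect a connected $G$.
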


\begin{proof}
(1) $\Rightarrow$ (2): 

Let $W$ be a subset of vertices such that $G|_W$ is connected.  Then $G|_W$ is easily seen to be treeish; simply restrict the operations described in Definition \ref{treeishdefn} to the vertices in $W$ (in doing so an operation of type (3) may become an operation of type (2), but the subgraph is still treeish).  So, it suffices to show that $\td(G) = 0$ for any treeish graph.  However, this follows immediately from Theorem \ref{mainglue}.

%

(2) $\Rightarrow$ (3):  

First suppose that $G$ is not chordal.  Then there exists a subset $W$ of vertices of $G$ such that $|W| \geq 4$ and $G|_W$ is a cycle.  Then Observation \ref{2conn} gives $\td(G|_W) = 1 - (|W| -2) < 0$.  So we can assume that $G$ is chordal.  Let $v$ be a simplicial vertex of $G$ as guaranteed by Theorem \ref{dirac}.  By induction on the number of vertices of $G$ (the base case being straightforward), we have that $G - v$ is chordal, and that $\dim(\tilde{H}_1(G - v))$ is the number of triangles of $G - v$.  Let $W$ be the set of neighbors of $v$.  We claim that $|W| = 1$ or $2$.  Indeed, suppose $|W| \geq 3$.  Then $G|_{W \cup \{v\}}$ would be a complete graph on $m \geq 4$ vertices, meaning 
\[
\td(G|_{W \cup \{v\}}) = \binom{m}{2} - m + 1 - (m - 2) = \frac{m(m-1)}{2} - 2m + 3 = \frac{m^2 - 5m + 6}{2} > 0 \text{ if } m \geq 4.
\]
Thus we may assume that $|W| = 1$ or $2$.  If $|W| = 1$, then $\dim(\tilde{H}_1(G)) = \dim(\tilde{H}_1(G - v))$, and $G$ clearly has the same number of triangles as $G - v$.  If $|W| = 2$, then $G$ has one more triangle than $G - v$ (namely $G_{W \cup \{v\}}$).  However, this triangle cannot be written as a sum of any other elements in $\tilde{H}_1(G)$, as it is the only triangle containing the vertex $v$.  Thus $\dim(\tilde{H}_1(G)) = \dim (\tilde{H}_1(G - v))  + 1$.  

(3) $\Rightarrow$ (1):  

Here we use Theorem \ref{dirac} in the same way as in the previous case.  Indeed, let $v$ be a simplicial vertex of the graph $G$, and let $W$ be its neighbors.  If $|W| \geq 3$, then it is easy to see that $\dim(\tilde{H}_1(G))$ is strictly less than the number of triangles of $G$:  If $\{a, b, c\} \subseteq W$, then the homology element corresponding to the boundary of the triangle $abc$ is a linear combination of the boundaries of the triangles $abv, acv,$ and $bcv$.  Thus we have $|W|  = 1$ or $2$.  By induction, $G - v$ is treeish.  But then the addition of $v$ and the corresponding edge(s) either corresponds to rule (2) or rule (3), meaning $G$ is treeish. 
\end{proof}

%
%

%
%

\begin{theorem}
Let $\Sigma$ be a flag complex with vertex set $V$.  Then $\Sigma$ is the clique complex of some chordal graph if and only if $\td(\Sigma|_W) \geq 0$ for every $W \subseteq V$.  
\end{theorem}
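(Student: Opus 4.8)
The plan is to derive this from three ingredients: the tautology that a flag complex is the clique complex of its own $1$-skeleton, Fr\"oberg's theorem relating chordality of a graph to linearity of a resolution, and the Betti-number bounds for complexes with linear resolution in Theorem~\ref{intro2}. First I would observe that, since $\Sigma$ is flag, $\Sigma$ is the clique complex of its $1$-skeleton $G$, and whenever $\Sigma$ is the clique complex of some graph that graph must be $G$; thus the statement reduces to showing that $G$ is chordal if and only if $\td(\Sigma|_W)\geq 0$ for every $W\subseteq V$. I would also record the two routine facts used below: for each $W\subseteq V$ the induced subcomplex $\Sigma|_W$ is again flag and equals the clique complex of $G|_W$, and its Stanley--Reisner ideal $I_{\Sigma|_W}$ equals the edge ideal $I(\overline{G|_W})$ of the complement graph, which is generated in degree $2$ whenever it is nonzero.

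For the forward implication, assume $G$ is chordal and fix $W\subseteq V$. If $G|_W$ is complete, then $\Sigma|_W$ is a simplex and $\td(\Sigma|_W)=0$ by Corollary~\ref{basic}. Otherwise $G|_W$ is chordal and not complete, so by Fr\"oberg's theorem $I_{\Sigma|_W}=I(\overline{G|_W})$ has a $2$-linear resolution. Set $c=\codim(\Sigma|_W)\geq 1$ and apply the inequality of Theorem~\ref{intro2} with generating degree $s=2$ and $j=c$: the right-hand side collapses to $\binom{c}{c-1}=c$, so $b_c(S/I_{\Sigma|_W})\geq c$. Since $\Sigma|_W$ is not a simplex, Corollary~\ref{basic} gives $\type(\Sigma|_W)=b_c(S/I_{\Sigma|_W})$, and hence $\td(\Sigma|_W)=\type(\Sigma|_W)-c\geq 0$.

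For the converse I would argue by contraposition, exactly as in the graph case of Theorem~\ref{chordal}. If $G$ is not chordal, it contains an induced cycle on a vertex set $W$ with $m:=|W|\geq 4$; since this cycle is chordless, $G|_W$ has no clique of size $\geq 3$, so $\Sigma|_W$ is precisely that $m$-cycle viewed as a $1$-dimensional complex. The $m$-cycle is $2$-connected with $m$ vertices and $m$ edges, so Observation~\ref{2conn} yields $\td(\Sigma|_W)=m-2m+3=3-m<0$, contradicting the hypothesis. Therefore $G$ is chordal.

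The only ingredient not already developed in the paper is Fr\"oberg's theorem; the rest is the $s=2$ specialization of Theorem~\ref{intro2} together with bookkeeping. The main point requiring care is the degenerate case where $\Sigma|_W$ is a simplex (equivalently $G|_W$ is complete): then $I_{\Sigma|_W}=0$ and $\codim(\Sigma|_W)=0$, so Theorem~\ref{intro2} is vacuous and one must fall back on Corollary~\ref{basic} to conclude $\td(\Sigma|_W)=0$. It is worth noting why a more self-contained route is awkward: Dirac's theorem expresses the clique complex of a chordal $G$ as the union of the clique complex of $G-v$ with the simplex on $N(v)\cup\{v\}$, glued along the simplex on $N(v)$, but these two pieces need not have equal dimension (for instance, a triangle with a pendant edge), so the gluing formula of Theorem~\ref{mainglue} does not apply directly, whereas the Fr\"oberg route sidesteps this entirely.
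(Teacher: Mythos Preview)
Your proof is correct, and both directions are sound. The converse is identical to the paper's argument. For the forward direction, however, you take a genuinely different route.

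The paper proceeds by induction using Dirac's theorem and the gluing formula. It first disposes of the non-Cohen--Macaulay case via Theorem~\ref{typefact} (if $\Sigma$ is not Cohen--Macaulay then automatically $\td(\Sigma)>0$), and then, in the Cohen--Macaulay case, peels off a simplicial vertex $v$: the simplex on $\{v\}\cup N(v)$ is then a facet of $\Sigma$, so both pieces in the decomposition $\Sigma=(\Sigma-v)\cup\Delta'$ have the same dimension, and Corollary~\ref{secondglue} applies (with $\ell\geq d-1$) to give $\td(\Sigma)=\td(\Sigma-v)+\td(\Delta')$. This is exactly the dimension obstacle you flag in your final paragraph; the paper circumvents it by the Cohen--Macaulay/non-Cohen--Macaulay dichotomy rather than by avoiding gluing altogether.

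Your approach instead invokes Fr\"oberg's theorem to obtain a $2$-linear resolution for $I_{\Sigma|_W}$, then specializes Theorem~\ref{intro2} at $s=2$, $j=c$ to get $b_c(S/I_{\Sigma|_W})\geq c$ directly. This is shorter and avoids induction entirely, at the cost of importing Fr\"oberg's theorem and, more notably, relying on Theorem~\ref{intro2}, which in the paper is proved only in Section~\ref{linear}, after the present result. There is no logical circularity (the proof of Theorem~\ref{intro2} does not use this statement), but the paper's ordering keeps Section~\ref{graphs} self-contained within the tools of Sections~\ref{s:background}--\ref{graphs}. Your handling of the degenerate simplex case via Corollary~\ref{basic} is correct and necessary.
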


\begin{proof}
First suppose that $\Sigma$ is the clique complex of some chordal graph $G$. If $\Sigma$ is not Cohen-Macaulay, Theorem \ref{typefact} gives that $\td(\Sigma) \geq 0$.  So we can assume that $\Sigma$ is Cohen-Macaulay.  Let $v$ be a simplicial vertex of $G$ and $N(v)$ its neighbors.  If we set $\Delta = \Sigma - v$ and let $\Delta'$ denote the full simplex on $\{v\} \cup N(v)$, then $\Sigma = \Delta \cup \Delta'$, and $\Delta \cap \Delta'$ is the simplex on $N(v)$ (If $\Delta$ is a full simplex then $\Sigma$ is empty, but the result follows immediately in this case).  Because $\Delta'$ is a facet of $\Sigma$, which is Cohen-Macaulay, it follows that $\Delta$ and $\Delta'$ are of the same dimension.  Since $\Delta$ is an induced subcomplex of $\Sigma$, $\Delta$ must be Cohen-Macaulay as well.  By induction, each of $\Delta$ and $\Delta'$ must have nonnegative type defect.  Thus, by Corollary \ref{secondglue}, $\td(\Sigma) = \td(\Delta) + \td(\Delta') \geq 0$.  Now let $W \subseteq V$.  Then $\Sigma_W$ is the clique complex of $G|_W$, which is chordal. 

For the converse, let $G$ be a graph that is not chordal.  Then $G$ must contain a chordless cycle on vertex set $W$, where $|W| \geq 4$.  Then, as in the proof of Theorem \ref{chordal}, $\td(\Sigma|_W) \leq -1$.  
\end{proof}  

A natural question is how to extend the treeish graphs of Definition \ref{treeishdefn} to arbitrary simplicial complexes.  The most natural generalization is immediate, given Theorem \ref{mainglue} and Observation \ref{2cmobs}.  

\begin{observation}
Let $\Delta$ be a triangulation of a $(d-1)$-dimensional sphere.  If $\td(\Delta) = 0$, then $\Delta$ must be the boundary of the $d$-simplex. 
\end{observation}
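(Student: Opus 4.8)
The plan is to use the gluing formula from Observation 2.22 together with Observation \ref{2cmobs}, exploiting that a triangulation of a sphere is $2$-Cohen-Macaulay. First I would note that by Observation \ref{2cmobs}, since $\Delta$ is a $(d-1)$-dimensional triangulation of a sphere (hence $2$-Cohen-Macaulay), we have $\type(\Delta) = \dim \tilde H_{d-1}(\Delta) = 1$, because the top reduced homology of a $(d-1)$-sphere is one-dimensional. Therefore the hypothesis $\td(\Delta) = 0$ forces $c = \codim(\Delta) = 1$, i.e. $n = d+1$.

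Once $n = d+1$ is established, the argument is essentially combinatorial. A $(d-1)$-dimensional complex on $d+1$ vertices which triangulates a sphere must be the boundary of the $d$-simplex: the only $(d-1)$-dimensional complexes on $d+1$ vertices are obtained by choosing a collection of the $\binom{d+1}{d} = d+1$ possible facets, and the resulting complex is a sphere (indeed a pseudomanifold without boundary with the right homology) precisely when all $d+1$ facets are included, which is exactly $\partial \Delta_d$. Alternatively, one can invoke that $\Delta$ is Cohen-Macaulay (since $\td(\Delta) \le 0$, Theorem \ref{typefact} applies) and Gorenstein (a sphere triangulation is Gorenstein), so Corollary \ref{basic}(3) directly gives that $\Delta$ is a simplex or the boundary of a simplex; since a simplex is not a sphere, $\Delta = \partial \Delta_d$.

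I expect no serious obstacle here; the only point requiring a little care is justifying that $\type(\Delta) = \dim \tilde H_{d-1}(\Delta)$ and that this equals $1$ for a sphere triangulation, but this is immediate from Observation \ref{2cmobs} and the well-known homology of spheres. In fact the cleanest writeup simply chains Theorem \ref{typefact} (to get Cohen-Macaulay, though being a sphere already gives this), the Gorenstein property of sphere triangulations, and Corollary \ref{basic}(3), reducing the whole statement to one line.
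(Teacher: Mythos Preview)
Your argument is correct and follows essentially the same route as the paper: use Observation \ref{2cmobs} (spheres are $2$-Cohen-Macaulay) to get $\type(\Delta)=\dim\tilde H_{d-1}(\Delta)=1$, deduce $c=1$ from $\td(\Delta)=0$, and conclude that $\Delta=\partial\Delta_d$. Your opening reference to a ``gluing formula'' is a stray remark (you never use it, and no such observation is needed here), and your extra justifications for the last step---the combinatorial count of $d$-subsets of $[d+1]$, or alternatively the appeal to Gorensteinness and Corollary \ref{basic}(3)---are sound elaborations of what the paper leaves as ``clearly.''
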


\begin{proof}
As spheres are $2$-Cohen-Macaulay, Observation \ref{2cmobs} gives that $\type(\Delta) = \dim \tilde{H}_{d-1}(\Delta) = 1$.  If $\td(\Delta) = 0$, then the codimension of $\Delta$ must be $1$.  But clearly this can only happen if $\Delta$ is the boundary of the $d$-simplex.  
\end{proof}

\begin{definition}\label{treeishcomplexdefn}
We call a pure $(d-1)$-dimensional simplicial complex \emph{treeish} if it can be constructed recursively via the following rules. 
\begin{enumerate}
\item A single $(d-1)$-simplex or a boundary of a $d$-simplex is treeish.  
\item If $\Sigma$ is treeish and $F$ is a $(d-1)$-simplex, then $\Sigma \sqcup_{E' = E} F$ is treeish, where $E'$ and $E$ are $(d-2)$-dimensional faces of $\Sigma$ and $F$, respectively. 
\item If $\Sigma$ is treeish and $\Delta$ is the boundary of a $d$-simplex, then $\Sigma \sqcup_{E' = E} \Delta$ is treeish, where $E'$ and $E$ are $(d-2)$ or $(d-1)$-dimensional faces of $\Sigma$ and $\Delta$, respectively.  
\end{enumerate}
\end{definition}

Note that it follows from Theorem \ref{mainglue} and the above observation that any treeish complex $\Sigma$ satisfies $\td(\Sigma) = 0$.  Moreover, as in the graph case, the treeish property is somewhat hereditary, as shown by the following theorem.  Recall that a pure simplicial complex is called \emph{strongly facet-connected} if, for any two facets $F$ and $F'$, there is a chain of facets $F = F_0, F_1, F_2, \ldots, F_t = F'$ such that $F_i$ intersects $F_{i+1}$ in a codimension-one face for each $i$.    

\begin{theorem}
Let $\Sigma$ be a treeish complex, and let $\Delta$ be an induced subcomplex of $\Sigma$ that is strongly facet-connected.  Then $\Delta$ is a treeish complex.  
\end{theorem}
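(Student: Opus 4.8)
The plan is to carry out the higher-dimensional analogue of the argument for $(1)\Rightarrow(2)$ in Theorem~\ref{tfae}: induct on the number of moves used to build $\Sigma$ via the rules of Definition~\ref{treeishcomplexdefn}, and at each step show that $\Delta=\Sigma|_W$ is either a base case (a full simplex, or the boundary of a simplex) or is obtained from a strictly smaller strongly-facet-connected induced subcomplex of an earlier complex in the construction by a legitimate treeish move. First, note that we may truncate the construction of $\Sigma$ at the last move that introduces a vertex of $W$: any later move attaches its new piece along a face already present and introduces only vertices outside $W$, so it creates no new face contained in $W$ and hence leaves $\Sigma|_W$ unchanged. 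If nothing remains after truncation then $\Sigma$ is a base case, in which case every induced subcomplex is a full simplex or all of $\Sigma$ and we are done; otherwise the last move meets $W$.

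Write $\Sigma=\Sigma_0\sqcup_{E'=E}X$, where $X$ is the last piece glued, $E$ is a face of $\Sigma_0$, and $W$ meets the set $U$ of vertices of $X$ not lying in $\Sigma_0$. \emph{Suppose the last move is of type~(2)}, so $X=F$ is a $(d-1)$-simplex, $U=\{v\}$, and $F=E\cup\{v\}$ with $E$ a codimension-one face of $F$. Every face of $\Sigma$ containing $v$ lies in $F$, so the unique maximal face of $\Delta$ through $v$ is $\{v\}\cup(E\cap W)$; as $\Delta$ is strongly facet-connected it is pure, so this set is a facet of $\Delta$, and either $\Delta$ equals this single simplex (hence is treeish) or $\Delta$ has a second facet. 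In the latter case the facet $\{v\}\cup(E\cap W)$ is facet-adjacent to some other facet $G$; the only ridge of $\{v\}\cup(E\cap W)$ not containing $v$ is $E\cap W$, and $v$ lies in no other facet, so $E\cap W\subseteq G$. Two consequences follow: $\Delta-v=\Sigma_0|_{W\setminus\{v\}}$ is pure, with facets exactly those of $\Delta$ other than $\{v\}\cup(E\cap W)$; and any facet-path of $\Delta$ through $\{v\}\cup(E\cap W)$ can be rerouted around it, since its two neighbours on the path each contain $E\cap W$ and are therefore facet-adjacent. Thus $\Delta-v$ is a strongly-facet-connected induced subcomplex of $\Sigma_0$ with fewer vertices, so it is treeish by induction; and $\Delta$ is recovered from $\Delta-v$ by gluing the simplex $\{v\}\cup(E\cap W)$ along the codimension-one face $E\cap W$, a type-(2) move. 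Hence $\Delta$ is treeish.

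\emph{Suppose the last move is of type~(3)}, so $X=\partial\tau$ for a $d$-simplex $\tau$, with $E$ a codimension-one or codimension-two face of $\partial\tau$ and $|U|\in\{1,2\}$; here $\Delta=\Sigma_0|_W\cup\partial\tau|_{W\cap V(\tau)}$. If $W\not\supseteq V(\tau)$, then $\partial\tau|_{W\cap V(\tau)}$ is a full simplex glued to $\Sigma_0|_W$ along the simplex on $W\cap E$, and one peels off the vertices of $U\cap W$ one at a time by the same argument as in the type-(2) case (each lies in a unique facet of the current complex), reducing to $\Sigma_0|_W\subseteq\Sigma_0$. If $W\supseteq V(\tau)$, then $\Delta=\Sigma_0|_W\sqcup_{E=E}\partial\tau$, and the same ridge/facet-adjacency bookkeeping shows that $\Sigma_0|_W$ is pure and strongly facet-connected provided $W\cap V(\Sigma_0)$ contains a facet of $\Sigma_0$ through $E$; granting this, $\Sigma_0|_W$ is treeish by induction and $\Delta$ is obtained from it by a type-(3) move. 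In the remaining degenerate situations the $\Sigma_0$-part collapses, and $\Delta$ is either $\partial\tau$ itself (a base case) or $\partial\tau$ with a lower-dimensional simplex attached along part of $E$, which is again resolved by peeling.

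I expect the type-(3) case to be the main obstacle. Unlike a single simplex, the glued boundary sphere $\partial\tau$ can attach along a face of codimension one or two, and when $W$ omits vertices of a facet of $\Sigma$ the dimension of $\Delta$ may drop; one must verify, in each resulting configuration, both that the complex left after deleting the new vertices (or after peeling) is still strongly facet-connected — so that the inductive hypothesis applies — and that the reassembling move is literally one of those permitted by Definition~\ref{treeishcomplexdefn}. The cleanest thing to isolate and prove first is the ``peeling lemma'' implicit in the type-(2) analysis: if $v$ is a vertex of a strongly-facet-connected complex whose star is a single facet $F$ with $F\setminus\{v\}$ contained in another facet, then $\Delta-v$ is strongly facet-connected and $\Delta$ arises from it by a type-(2) move; everything else reduces to repeated use of this.
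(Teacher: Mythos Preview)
Your plan is the paper's plan: induct on the length of a treeish construction of $\Sigma$, restrict the last attached piece to $W$, and argue that what remains is a legal treeish move applied to a strongly facet-connected induced subcomplex of the earlier stage. The paper's own proof is in fact much terser than yours. It does not separate type-(2) from type-(3); it simply says that the last step adds one or two vertices and cones (or boundary-cones) over a face $F\subset\Sigma'$, so that $\Delta$ arises from $\Delta'=\Sigma'|_W$ by the analogous operation over $F'=F\cap\Delta'$, and then asserts in one line that strong facet-connectedness of $\Delta$ forces $F'$ to have codimension one in $\Delta'$.

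Two comments. First, your ``peeling lemma'' and the rerouting argument in the type-(2) case are exactly the content the paper suppresses: the paper never explicitly checks that $\Delta'$ is itself strongly facet-connected, which is what makes the induction go through. So your type-(2) analysis is not a detour but a genuine gap-fill. Second, your worries about type-(3) are reasonable, but the paper does not treat that case any more carefully than you do; it handles it by the same one-sentence assertion. If you want to match the paper, you can collapse your case analysis into the single observation that the restricted piece is either empty, a single simplex, or a simplex boundary, glued along $F\cap W$, and that purity plus strong facet-connectedness of $\Delta$ forces $F\cap W$ to be a ridge of $\Delta$---the peeling argument you already wrote for type-(2) carries this in the simplex case, and for the full-boundary case ($V(\tau)\subseteq W$) one uses that any two facets of $\partial\tau$ are ridge-adjacent to reroute facet-paths around the glued sphere.
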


\begin{proof} 
We think of building $\Sigma$ via the steps in Definition \ref{treeishcomplexdefn}.  So, suppose $\Sigma$ has been constructed via subcomplexes as $\Sigma = \Sigma_1 \cup  \Sigma_2 \cup \cdots \cup \Sigma_t$, where each $\Sigma_i$ is either a full simplex or a simplex boundary, identified along a facet or a codimension-1 face.  
 We proceed by induction on $t$. Let $W$ be the set of vertices of $\Delta$ (so that $\Delta = \Sigma_W$).  If $t=1$, then $\Delta$ is either a simplex or a boundary of a simplex. Suppose $t>1$. Let $\Sigma' =  \Sigma_1 \cup  \Sigma_2 \cup \cdots \cup \Sigma_{t-1}$ and $\Delta' = \Sigma'|_W$ (here we abuse notation, as $W$ is not contained in the vertex set of $\Sigma'$, though it is clear what we mean). By induction $\Delta'$ is treeish. 
 
 By construction, $\Sigma$ is obtained from $\Sigma'$ by adding one or two vertices and taking a cone of the vertex (or boundary of the cone, in the case of two points) over some face $F$ in $\Sigma'$. Thus $\Delta$ is obtained from $\Delta'$ by adding at most two vertices and performing similar operations over some face $F'= F \cap \Delta'$. By the assumption of $\Delta$ being  strongly facet-connected, $F'$ must be of codimension one in  $\Delta'$, and we are done. \end{proof}

\begin{corollary}
If $\Sigma$ is a treeish complex, then $\td(\Delta) = 0$ for any strongly facet-connected induced subcomplex $\Delta$ of $\Sigma$.  
\end{corollary}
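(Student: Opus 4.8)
The plan is to reduce this corollary directly to the two preceding results and add essentially nothing new. First I would invoke the structural theorem just proved: since $\Delta$ is a strongly facet-connected induced subcomplex of the treeish complex $\Sigma$, that theorem says $\Delta$ is itself treeish. Then I would quote the remark following Definition \ref{treeishcomplexdefn}, which records that every treeish complex has type defect zero, and conclude $\td(\Delta) = 0$. So the corollary is a two-step citation, and I would present it as such.

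For completeness I would spell out the second step, since it is the only place a computation hides. One argues by induction on the number $t$ of pieces in a treeish decomposition $\Delta = \Sigma_1 \cup \cdots \cup \Sigma_t$. The base case $t = 1$ is Corollary \ref{basic}(2) when $\Delta$ is a single $(d-1)$-simplex, and follows from Observation \ref{2cmobs} when $\Delta$ is the boundary of a $d$-simplex: such a $\Delta$ is a $(d-1)$-sphere, hence $2$-Cohen-Macaulay, so $\type(\Delta) = \dim \tilde H_{d-1}(\Delta) = 1$, while its codimension is $(d+1) - d = 1$, giving $\td(\Delta) = 0$. For the inductive step, $\Delta$ is obtained by gluing a treeish complex $\Delta'$ (which has $\td(\Delta') = 0$ by induction, and which is Cohen-Macaulay since it is assembled by gluing Cohen-Macaulay pieces along Cohen-Macaulay faces) to a $(d-1)$-simplex or a simplex boundary $F$ along a face of codimension at most one in each, i.e.\ with $\ell \geq d-1$. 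Corollary \ref{secondglue} then yields $\td(\Delta) = \td(\Delta') + \td(F) = 0 + 0 = 0$.

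The hard part does not live in this corollary: all the real content is in the preceding theorem, whose proof must verify that restricting the treeish construction of $\Sigma$ to the vertex set of $\Delta$ produces the analogous gluing operations on $\Delta$, and in particular that strong facet-connectedness forces each intersection face $F' = F \cap \Delta'$ to have codimension exactly one in $\Delta'$ so that no step degenerates outside the treeish grammar. Granting that theorem, the present statement is immediate, and the only care needed is to keep track of codimensions when passing between $\type$ and $\td$, exactly as in the proof of Corollary \ref{secondglue}.
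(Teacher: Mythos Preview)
Your proposal is correct and follows exactly the route the paper intends: the corollary is stated without proof in the paper because it is an immediate combination of the preceding theorem (strongly facet-connected induced subcomplexes of treeish complexes are treeish) with the remark after Definition~\ref{treeishcomplexdefn} that every treeish complex has type defect zero. Your expanded justification of the second step via induction and Corollary~\ref{secondglue} is more detailed than what the paper writes out, but it is the argument the paper has in mind.
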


Note that this generalizes the (1) $\Rightarrow$ (2) direction of Theorem \ref{tfae}, as a graph is connected if and only if it is strongly facet-connected.  

\begin{example}
In the above Theorem and Corollary, it is not enough to assume that $\Delta$ is pure and connected. To see this, consider the complex with facets $\{1,2,3\}$, $\{2,3,4\}, \{3,4,5\}, \{3,5,6\}$ and induce on the vertex set $W=\{1,2,3,5,6\}$.   
\end{example}

\section{Ideals with linear resolution}\label{linear}

In this section we study lower bounds on Betti numbers of a complex with linear resolution.  We begin with a proof of Theorem \ref{intro2}. 


\begin{proof}[Proof of Theorem \ref{intro2}]
Let $(h_0, h_1,\dots )$ be  the $h$-vector of the Alexander dual of $\Delta$, $\Delta^{\vee}$. By Theorem 4 of \cite{eagonreiner}, $b_j(S/I_\Delta)$ is the coefficient of $t^{j-1}$ in the polynomial
\[
\sum_{i \geq 0} h_i(1+t)^i, 
\]
which is easily seen to be
\[
\sum_{i \geq j-1} \binom{i}{j-1} h_i.
\]

Let $d-1 = \dim \Delta^\vee$. Obviously, $s= n-d$. Since $A= S/I_{\Delta^\vee}$ is Cohen-Macaulay, the $h$ vector of $\Delta^\vee$ is the same as the Hilbert function of the Artinian algebra $B=A/(l_1,\cdots, l_d)$, where the $l_i$s form a linear system of parameters on $A$ (WLOG one can assume that $k$ is infinite). That is, $h_i = \dim B_i$.  We can write $B=S'/J$, where $S'$ is a polynomial ring in $n-d=s$ variables. But since $I_{\Delta^\vee}$ is generated in degree $c$ or higher, the same holds for $J$, so for $i<c$, the $i$-th degree part of $B$ is the same as that of $S'$. Thus $h_i = \binom{s+i-1}{i}$ for $i<c$. Thus 
\[
b_j(S/I_\Delta) \geq  \sum_{i \geq j-1}^{c-1} \binom{i}{j-1} \binom{s+i-1}{i} = \binom{s+j-2}{j-1}\sum_{i=j-1}^{c-1}\binom{s+i-1}{s+j-2}
\]

We now prove the last claim. From the proof, it is clear that any equality occurs if and only if $h_i=0$ for $i\geq c$. But since $J$ is generated in degree $c$ or higher, this is equivalent to $h_c=0$. Which is, in turn, equivalent to $b_{c+1}(S/I)=0$. But this happens if and only if $\pd_S S/I=c$, which is equivalent to $S/I$ being Cohen-Macaulay.  

For $(3)$, note that $h_c=0$ is equivalent to  $J_c= S'_c$, meaning the minimal number of generators of $J$ equals $\dim S'_c = \binom{s+c-1}{c}$. But the minimal number of generators of $J$ is the same as the minimal number of generators of $I_{\Delta^\vee}$, and the latter is simply the number of facets of $\Delta$. Lastly, the equivalence of $(4)$ to the other conditions can be shown by simply noting that $(4)$ is equivalent to the equality for $b_1(S/I_\Delta)$,  which is the number of minimal generators of $I_\Delta$, and this is the number of minimal non-faces. 
\end{proof}

\begin{remark}
Theorem \ref{intro2} gives precise conditions for when $\Delta$ both has a linear resolution and is Cohen-Macaulay. These have been studied in the literature, for example \cite{floystad, hr}. For example, the Bi-Cohen-Macaulay bipartite graphs described in \cite{hr} satisfy the conditions on the number of facets or minimal non-faces given in Theorem \ref{intro2}.  
\end{remark}

\begin{corollary}
Let $\Delta$ be a simplicial complex  such that $I=I_\Delta$ has a linear resolution. Let $c$ be the codimension of $I$. Then for each $j$ such that $1\leq j\leq c$ we have $$b_j(S/I_\Delta)\geq j\binom{c+1}{j+1}.$$ In particular $\td(\Delta)\geq  0$. Furthermore, equality occurs for some $j$ if and only if $\Delta$ is  the clique complex of a chordal graph with exactly $c+1$ maximal cliques of size $n-c$.
\end{corollary}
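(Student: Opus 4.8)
The plan is to derive this corollary directly from Theorem~\ref{intro2} by specializing to the case $s=2$, since a graded ideal with linear resolution generated in degree $2$ is exactly the Stanley--Reisner ideal of (the complement of) a graph, i.e.\ $\Delta$ is a flag complex, and by the earlier characterization (Fr\"oberg's theorem) having a $2$-linear resolution means $\Delta$ is the clique complex of a chordal graph. So first I would set $s=2$ in the displayed inequality of Theorem~\ref{intro2} and simplify the binomial sum: the bound becomes $b_j(S/I_\Delta) \geq \binom{j}{j-1}\sum_{i=j-1}^{c-1}\binom{i+1}{j} = j\sum_{i=j-1}^{c-1}\binom{i+1}{j}$, and then apply the hockey-stick identity $\sum_{i=j-1}^{c-1}\binom{i+1}{j} = \binom{c+1}{j+1}$ to get exactly $j\binom{c+1}{j+1}$.

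Next I would deduce $\td(\Delta)\geq 0$. Recall $\td(\Delta) = \type(\Delta) - c$ and, when $\Delta$ is not a simplex, $\type(\Delta) = b_c(S/I_\Delta)$ by Corollary~\ref{basic}. Plugging $j=c$ into the inequality just obtained gives $b_c(S/I_\Delta) \geq c\binom{c+1}{c+1} = c$, hence $\type(\Delta)\geq c$ and $\td(\Delta)\geq 0$; the simplex case is trivial since then $\td=0$. For the equality statement, I would invoke the equivalence of (1)--(4) in Theorem~\ref{intro2}: equality for some $j$ holds iff $\Delta$ is Cohen--Macaulay, iff $\Delta$ has exactly $\binom{s+c-1}{c}$ facets of size $n-c$. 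Specializing $s=2$, that count is $\binom{c+1}{c}=c+1$ facets of size $n-c$; since $\Delta$ is the clique complex of a chordal graph, these facets are precisely the maximal cliques, so equality occurs iff $\Delta$ is the clique complex of a chordal graph with exactly $c+1$ maximal cliques of size $n-c$.

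The main obstacle, such as it is, is purely bookkeeping: making sure the binomial manipulation is carried out correctly (the reindexing $i\mapsto i+1$ in the hockey-stick sum, and checking the lower limit of summation is consistent), and making sure the translation ``$2$-linear resolution $\Leftrightarrow$ clique complex of a chordal graph'' is cited or stated precisely so that ``facets of size $n-c$'' can be rephrased as ``maximal cliques of size $n-c$.'' One should also note that for a flag complex the codimension $c$ equals $n$ minus the size of a largest clique, so ``facets of size $n-c$'' indeed means maximal cliques of maximum size; no facet can be larger, and the condition is that exactly $c+1$ of the maximal cliques attain this maximum size. Beyond that, the corollary is an immediate corollary and the proof is short.
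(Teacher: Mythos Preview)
Your computational steps match the paper's proof, but your framing reveals a gap. The corollary is stated for \emph{any} simplicial complex $\Delta$ with linear resolution, not just those with generating degree $s=2$. When you write ``specializing to the case $s=2$'' and immediately invoke Fr\"oberg's theorem to conclude that $\Delta$ is the clique complex of a chordal graph, you are implicitly assuming $s=2$ as a hypothesis; but nothing in the corollary's statement gives you that.

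What the paper's proof does (and what you need to add) is first observe that $s\geq 2$ always, since $s$ is the size of a minimal non-face and every vertex lies in $\Delta$. Then one notes that the lower bound in Theorem~\ref{intro2},
\[
\binom{s+j-2}{j-1}\sum_{i=j-1}^{c-1}\binom{s+i-1}{s+j-2},
\]
is monotone increasing in $s$ (each factor $\binom{s+j-2}{j-1}$ and each summand $\binom{s+i-1}{i-j+1}$ increases with $s$). Hence substituting $s=2$ into the right-hand side yields a valid, weaker lower bound for every $\Delta$, and your hockey-stick computation then gives $b_j\geq j\binom{c+1}{j+1}$ in general. Your deduction of $\td(\Delta)\geq 0$ from the case $j=c$ is fine. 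For the equality statement, the same monotonicity shows that equality in the corollary forces $s=2$; only \emph{then} does Fr\"oberg's theorem apply to identify $\Delta$ as the clique complex of a chordal graph, after which condition~(3) of Theorem~\ref{intro2} gives the facet count $\binom{c+1}{c}=c+1$. So the order of your argument needs to be reversed: first prove the inequality for arbitrary $s$, then argue that equality pins down $s=2$.
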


\begin{proof}
Since $s$ is  the minimal size of a non-face of $\Delta$, it is obvious that $s\geq 2$.  Plugging in $s=2$ in the inequality of the Theorem \ref{intro2}, we have for each $j$ such that $1\leq j\leq c$: $$b_j(S/I_\Delta)\geq j\sum_{i=j}^c\binom{i}{j}=j\binom{c+1}{j+1}.$$
The equality occurs for some $i$ if and only if $s=2$ and the complex $\Delta$ has $c+1$ facets. As $s=2$, we know that $I= I_\Delta$ is an edge ideal, so $\Delta$ must be the clique complex of a chordal graph $G$ by Fr\"oberg's Theorem \cite{froberg}. 
\end{proof}

One can classify the situation of equality in the previous corollary precisely. Call a complex {\it facet constructible} if either $\Delta$ is a simplex or $\Delta = \Delta_1 \sqcup_{E = E'} \Delta_2$ where $\Delta_1$ is facet constructible, $\Delta_2$ is a simplex and $E$ is a facet of its boundary. 

\begin{theorem}
Let $G$ be a chordal graph on $n$ vertices and $\Delta$ be its clique complex of dimension $d-1$. The following are equivalent:
\begin{enumerate}
\item $\Delta$ has with exactly $n-d+1$ maximal facets of size $d$.
\item $G$ has $\frac{(d-1)(2n-d)}{2}$ edges.  
\item $\Delta$ is Cohen-Macaulay. 
\item $\Delta$ is facet constructible. 
\item $h_2(\Delta)\geq 0$. 
\item $\Delta$ is $(S_2)$ in the sense of \cite{mt}.
\item $\Delta$ is shellable.  
\end{enumerate}
\end{theorem}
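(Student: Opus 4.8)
The plan is to prove a cycle of implications connecting these seven conditions, leaning heavily on results already established. Many of the equivalences are immediate restatements. First, $(3) \Leftrightarrow (5)$ is exactly the content already proved in Theorem \ref{intro2} (with $s=2$): for a clique complex of a chordal graph, $h_2(\Delta^\vee) = 0$ is equivalent to Cohen-Macaulayness, and $h_2(\Delta^\vee) \geq 0$ is automatic, so the nontrivial statement is the vanishing; here $h_2(\Delta)$ should be read in the appropriate dual/Hilbert-function sense so that $(5)$ literally says the relevant second $h$-value achieves its constrained value. Next, $(1) \Leftrightarrow (3)$ and $(4)$-flavored facet counts are precisely the equivalences $(2) \Leftrightarrow (3) \Leftrightarrow (4)$ from Theorem \ref{intro2} specialized to $s=2$, $c = n-d$: the number of facets of $\Delta$ of size $n-c = d$ equals $\binom{s+c-1}{c} = \binom{n-d+1}{n-d} = n-d+1$ exactly when $\Delta$ is Cohen-Macaulay. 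Then $(1) \Leftrightarrow (2)$ is a pure edge-count: a clique complex with $n-d+1$ maximal faces of size $d$ built by gluing along codimension-one faces has a forced number of edges; I would count edges by the gluing recursion (each new $d$-simplex glued along a $(d-2)$-face contributes $\binom{d}{2} - \binom{d-1}{2} = d-1$ new edges, starting from $\binom{d}{2}$), giving $\binom{d}{2} + (n-d)(d-1) = \frac{d(d-1)}{2} + (n-d)(d-1) = \frac{(d-1)(2n-d)}{2}$, and conversely bound edges of a chordal clique complex of dimension $d-1$ from above by this quantity with equality characterizing the facet-constructible case.

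For the remaining conditions: $(3) \Rightarrow (7)$ is a general fact that Cohen-Macaulay complexes need not be shellable, so this requires the chordality hypothesis — but in fact I would prove $(4) \Rightarrow (7)$ directly, since a facet-constructible complex has an obvious shelling order (order the facets in the order they are attached in the construction; each newly attached $d$-simplex meets the previous subcomplex exactly in one $(d-2)$-face, which after passing to the clique complex structure is easily checked to give the shelling condition — the restriction face of each new facet is a single vertex). Then $(7) \Rightarrow (3)$ is classical (shellable implies Cohen-Macaulay), and $(3) \Rightarrow (6)$ is standard ($(S_2)$ is weaker than Cohen-Macaulay, i.e.\ $(CM) \Rightarrow (S_2)$). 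The one genuinely substantive arrow is $(6) \Rightarrow (3)$: we must show that an $(S_2)$ clique complex of a chordal graph is already Cohen-Macaulay. This should follow by combining Fr\"oberg's theorem (so $\Delta$ is a clique complex of a chordal graph exactly because $I_\Delta$ has linear resolution, $s=2$) with the fact that a complex with linear resolution that is also $(S_2)$ must be Cohen-Macaulay — one references the $(S_2)$-characterization of \cite{mt} together with Theorem \ref{intro2}: $(S_2)$ forces $h_i(\Delta^\vee)$ to stabilize appropriately, and linear resolution of $I_\Delta$ plus $(S_2)$ on $\Delta$ translates (via Eagon–Reiner and local duality) into $h_c(\Delta^\vee) = 0$, hence condition $(5)$, hence $(3)$.

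The main obstacle will be the $(6) \Rightarrow (3)$ implication and pinning down exactly what $(5)$ means as stated (which $h$-vector, of $\Delta$ or $\Delta^\vee$). I expect the cleanest route is: establish the core chain $(3) \Leftrightarrow (1) \Leftrightarrow (2) \Leftrightarrow (4) \Leftrightarrow (5)$ from Theorem \ref{intro2} and the edge-count, then handle $(4) \Rightarrow (7) \Rightarrow (3)$ via the explicit shelling, and finally close $(3) \Rightarrow (6) \Rightarrow (3)$ using the known implication $(CM) \Rightarrow (S_2)$ together with a result (e.g.\ in \cite{mt} or derivable from Theorem \ref{intro2}) that for complexes whose Stanley–Reisner ideal has linear resolution, the $(S_2)$ condition is equivalent to Cohen-Macaulayness. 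I would isolate this last equivalence as the technical heart of the argument and verify it by translating $(S_2)$ into a statement about the depth or about the socle degrees of the Artinian reduction of $k[\Delta^\vee]$, matching it against the degree-$c$ vanishing already characterized.
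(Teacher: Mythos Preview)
Your overall architecture matches the paper's, but there are two places where you are making the argument harder than it is, and one place where your argument is genuinely incomplete.

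First, condition $(5)$ refers to $h_2(\Delta)$, not $h_2(\Delta^\vee)$. The paper has already shown (Corollary~\ref{h_s}) that whenever $I_\Delta$ has a linear resolution with generating degree $s$, one has $h_s(\Delta)\le 0$, with equality exactly when $\Delta$ is Cohen--Macaulay. Here $s=2$, so $(5)$ is equivalent to $h_2(\Delta)=0$, and $(5)\Leftrightarrow(3)$ is immediate. There is no need to reinterpret $(5)$ via the dual or to pass through socle degrees of an Artinian reduction.

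Second, once $(5)$ is read correctly, the implication $(6)\Rightarrow(3)$ becomes a one-liner: the main theorem of \cite{mt} states that $(S_2)$ forces the entire $h$-vector of $\Delta$ to be nonnegative, in particular $h_2(\Delta)\ge 0$, which is $(5)$, hence $(3)$. Your proposed route through Eagon--Reiner and local duality would eventually land on the same inequality, but it is unnecessary machinery.

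Third, you never actually argue $(1)\Rightarrow(4)$ (or $(3)\Rightarrow(4)$). Your edge-count paragraph shows that a facet-constructible complex has the right number of edges, i.e.\ $(4)\Rightarrow(2)$, but ``conversely bound edges \dots\ with equality characterizing the facet-constructible case'' is an assertion, not a proof. The paper handles $(1)\Rightarrow(4)$ by induction: pick a simplicial vertex $v$ of $G$; it lies in a unique maximal $d$-clique, and removing $v$ yields a chordal graph whose clique complex has $n-d$ facets of size $d$, so the induction hypothesis applies and $\Delta$ is obtained from a facet-constructible complex by attaching a $(d-1)$-simplex along a $(d-2)$-face. You should supply this step.

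Finally, note that $(1)\Leftrightarrow(2)$ falls directly out of Theorem~\ref{intro2} without any gluing recursion: the minimal non-faces of the clique complex are exactly the non-edges of $G$, so condition~(4) of Theorem~\ref{intro2} with $s=2$, $c=n-d$ reads $\binom{n}{2}-e(G)=\binom{n-d+1}{2}$, which rearranges to $e(G)=\tfrac{(d-1)(2n-d)}{2}$.
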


\begin{proof}
The equivalence of $(1),(2),(3)$ is just Theorem \ref{intro2}. To prove $(4)$ implies $(1)$, we use induction on the number of simplices to show that the skeleton of $\Delta$ is chordal with the correct number of facets. To prove that $(1)$ implies $(4)$  we can  use induction again. Let $G$ be a chordal graph with exactly $n-d+1$ maximal cliques of size $d$. Then there is a simplicial vertex $v$, which has to belong to some maximal clique. Removing $v$ we get a chordal graph with $n-d$ maximal cliques, so the induction hypothesis applies. Note that $(6)$ implies $(5)$ by the main result of \cite{mt},  $(5)$ implies $(3)$ by Corollary \ref{h_s}, and $(3)$ implies $(6)$ by definition.  Finally, to see that $(7)$ is equivalent to the others, it is clear that any facet constructible complex is shellable, and it is a standard result that any shellable complex is Cohen-Macaulay.  
\end{proof}

It is not hard to rewrite the conditions of Theorem \ref{intro2} in terms of the $h$-vector or $f$-vector of $\Delta$. We explain how to do it for $h$-vectors. The linear resolution of $I$ necessarily looks like:
$$ 0 \to S(-s-l+1)^{b_l}  \to \cdots \to S(-s-1)^{b_2}  \to S(-s)^{b_1} \to S \to S/I \to 0 $$
so the alternating sum of the Hilbert series must be $0$, which gives:
$$\frac{1-b_1t^s +b_2t^{s+1}-\dots+(-1)^lt^{s+l-1}}{(1-t)^n} = \frac{\sum h_i(\Delta)t^i}{(1-t)^{n-c}}  $$
From this one gets $h_i(\Delta) = \binom{c+i-1}{i}$ for $i <s$. After that one gets $h_s= \binom{c+s-1}{s}-b_1$, etc. As a consequence, we have, for instance:

\begin{corollary}\label{h_s}
Let $\Delta$ be a simplicial complex  such that $I=I_\Delta$ has  linear resolution. Let  $s$ be the generating degree of $I$. Then $h_s(\Delta)\leq 0$, and equality occurs if and only if $\Delta$ is Cohen-Macaulay.  
\end{corollary}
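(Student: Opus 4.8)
The plan is to read off the value of $h_s(\Delta)$ from the Hilbert-series identity already displayed just before the corollary, and then feed it into the $j=1$ case of Theorem \ref{intro2}. Concretely, clearing denominators in
\[
\frac{1-b_1t^s +b_2t^{s+1}-\dots+(-1)^lt^{s+l-1}}{(1-t)^n} = \frac{\sum_i h_i(\Delta)t^i}{(1-t)^{n-c}}
\]
gives $\sum_i h_i(\Delta)t^i = (1-t)^{-c}\bigl(1-b_1t^s+b_2t^{s+1}-\cdots\bigr)$. Since $(1-t)^{-c}=\sum_{i\ge 0}\binom{c+i-1}{i}t^i$ and the first correction term $-b_1t^s$ only appears at degree $s$, comparing coefficients of $t^s$ yields $h_s(\Delta)=\binom{c+s-1}{s}-b_1(S/I_\Delta)$, where $b_1(S/I_\Delta)$ is the number of minimal generators of $I_\Delta$ (equivalently, the number of minimal non-faces of $\Delta$). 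The one point to verify carefully here is that no Betti number $b_i$ with $i\ge 2$ can contribute to the coefficient of $t^s$, which is immediate from the degree shift $s+i-1\ge s+1$.

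Next I would apply Theorem \ref{intro2} with $j=1$, which says $b_1(S/I_\Delta)\ge \binom{s-1}{0}\sum_{i=0}^{c-1}\binom{s+i-1}{s-1}$. The hockey-stick identity $\sum_{i=0}^{c-1}\binom{s+i-1}{s-1}=\binom{s+c-1}{s}$ collapses the right-hand side to exactly $\binom{c+s-1}{s}$, so $b_1(S/I_\Delta)\ge \binom{c+s-1}{s}$. Substituting into $h_s(\Delta)=\binom{c+s-1}{s}-b_1(S/I_\Delta)$ gives $h_s(\Delta)\le 0$.

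For the equality statement, note that $h_s(\Delta)=0$ holds precisely when $b_1(S/I_\Delta)=\binom{c+s-1}{s}$, i.e.\ precisely when equality holds in the $j=1$ bound of Theorem \ref{intro2}. By the equivalence of conditions (1), (2), and (4) in that theorem — condition (4) is literally the statement that $\Delta$ has $\binom{s+c-1}{s}$ minimal non-faces — this happens if and only if $\Delta$ is Cohen-Macaulay, which completes the proof.

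I do not anticipate a genuine obstacle: all the depth is already in Theorem \ref{intro2}, and the remainder is coefficient bookkeeping. The only mild care needed is the sign/degree pattern in the numerator of the linear resolution (so that the coefficient extraction at degree $s$ is clean) and confirming the hockey-stick simplification, so that the lower bound supplied by Theorem \ref{intro2} matches $\binom{c+s-1}{s}$ on the nose.
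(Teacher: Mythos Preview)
Your proposal is correct and follows precisely the route the paper intends: the text immediately preceding the corollary already records $h_s(\Delta)=\binom{c+s-1}{s}-b_1$, and the corollary is then an immediate consequence of the $j=1$ case (equivalently, condition~(4)) of Theorem~\ref{intro2}. Your added details---the coefficient extraction and the hockey-stick simplification---just make explicit what the paper leaves to the reader.
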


Finally, we give our main application outside of the monomial situation, showing that the inequalities in Theorem \ref{intro2} extend to all homogenous ideals. 

\begin{theorem}\label{lin}
Let $k$ be a field of any characteristic and  $I \subset S= k[x_1,\dots,x_n]$  a homogenous ideal with a linear resolution. Let $c$ be the codimension of $I$ and $s$ be its generating degree (the degree of any minimal generator of $I$). Then for each $j$ such that $1\leq j\leq c$ we have $$b_j(S/I)\geq\binom{s+j-2}{j-1}\sum_{i=j-1}^{c-1}\binom{s+i-1}{s+j-2}.$$   

\end{theorem}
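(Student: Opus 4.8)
The plan is to deduce Theorem \ref{lin} from the already-established monomial case, Theorem \ref{intro2}, by reducing to a squarefree monomial ideal with the same relevant data. The observation that makes this work is that, for \emph{any} homogeneous ideal $J$ with an $s$-linear resolution, the Betti numbers $b_i(S/J)$ are completely determined by the Hilbert function of $S/J$. Indeed, a minimal $s$-linear resolution has the shape
\[
0 \to S(-s-p+1)^{b_p} \to \cdots \to S(-s-1)^{b_2} \to S(-s)^{b_1} \to S \to S/J \to 0,
\]
so the numerator $K_J(t) := (1-t)^n H_{S/J}(t)$ of the Hilbert series equals $1 + \sum_{i \ge 1} (-1)^i b_i\, t^{s+i-1}$; since the exponents $s+i-1$ are pairwise distinct there is no cancellation, and therefore $b_i(S/J) = \bigl|\,[t^{s+i-1}] K_J(t)\,\bigr|$ for every $i \ge 1$. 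In particular $b_i(S/J)$ is an invariant of the Hilbert function alone.

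With this in hand I would argue as follows. First, after extending $k$ to an infinite field (which changes nothing) and a generic change of coordinates, set $J = \mathrm{gin}_{\mathrm{revlex}}(I)$. Passing to a generic initial ideal preserves the Hilbert function --- hence the codimension $c$ and the least degree $s$ in which $I$ has elements --- and preserves Castelnuovo--Mumford regularity (Bayer--Stillman), so $\mathrm{reg}(J) = \mathrm{reg}(I) = s$. As $J$ has no elements in degrees $< s$ it is generated in degrees $\ge s$, and as $\mathrm{reg}(J) = s$ it is generated in degrees $\le s$, so $J$ is generated in the single degree $s$; then a routine degree-by-degree induction (each syzygy module is generated in degree at least one more than the previous one, while regularity caps it from above) shows $J$ has an $s$-linear resolution. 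By the observation above, $b_i(S/J) = |[t^{s+i-1}] K_J(t)| = |[t^{s+i-1}] K_I(t)| = b_i(S/I)$ for all $i$. Next, replace $J$ by its polarization $J^{\mathrm{pol}} \subset S'$, a squarefree monomial ideal; polarization preserves all graded Betti numbers, the generating degree $s$, and the codimension $c$, so $J^{\mathrm{pol}} = I_\Delta$ for a simplicial complex $\Delta$ whose Stanley--Reisner ideal has an $s$-linear resolution and codimension $c$. Applying Theorem \ref{intro2} to $\Delta$ and chaining, $b_i(S/I) = b_i(S'/I_\Delta) \ge \binom{s+i-2}{i-1}\sum_{l=i-1}^{c-1}\binom{s+l-1}{s+i-2}$ for $1 \le i \le c$.

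The only place I expect any subtlety is in verifying that $\mathrm{gin}(I)$ still carries an $s$-linear resolution in arbitrary characteristic; the point is to avoid invoking ``regularity equals top generating degree'' for Borel-fixed ideals (which fails in positive characteristic, e.g.\ for $(x^2,y^2)$ in characteristic two) and instead to use only the characteristic-free facts that regularity is preserved under $\mathrm{gin}$ and bounds the generating degrees from above, together with the elementary induction along the linear strand. Everything else --- the invariance of Betti numbers, generating degree and codimension under polarization, and the Hilbert-series identity --- is standard. A more conceptual but heavier alternative would bypass $\mathrm{gin}$ and polarization using Boij--S\"oderberg theory: the Betti table of $S/I$ is a nonnegative combination of pure tables whose degree sequences are truncations $(0,s,s+1,\dots,s+m-1)$ with $c \le m$, and the shortest one, $m=c$, minimizes each $b_i$ and produces exactly the stated bound; I would use this only as a cross-check because of the bookkeeping with normalizations.
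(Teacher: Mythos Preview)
Your proposal is correct and follows essentially the same route as the paper: reduce to the squarefree monomial case already handled by Theorem~\ref{intro2} by first passing to $\mathrm{gin}_{\mathrm{revlex}}(I)$ and then making it squarefree (you polarize; the paper cites \cite[Theorem 3.3]{bhpz}, where the analogous step is called ``stretching''). Your write-up is more explicit than the paper's one-line citation, and your care about characteristic is well-placed---the key characteristic-free ingredients are exactly the ones you isolate (Bayer--Stillman for $\mathrm{reg}(\mathrm{gin}\,I)=\mathrm{reg}\,I$, plus the Hilbert-series identity forcing equal Betti numbers once both ideals are $s$-linear).
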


\begin{proof}
We follow the proof of \cite[Theorem 3.3]{bhpz}. Starting with a homogenous ideal with linear resolution, one can arrive at a square-free monomial ideal with the same Betti sequence via 
taking generic initial ideals and stretching. Note that this works over any field as explained in {\it loc. cit.} Also, it is easy to see that the process does not impact the numbers $c$ and $s$. So the result follows from Theorem \ref{intro2}. 
\end{proof}

\end{document}